\newcommand{\Ff}{\mathcal F}
 \newcommand{\Cc}{\mathcal C}
 \newcommand{\Ee}{\mathcal{E}}
  \newcommand{\Ss}{\mathbf{S}}
 \newcommand{\RR}{\mathbf{R}}  
 \newcommand{\BB}{\mathbf{B}}  
    \newcommand{\dist}{\operatorname{dist}}
 \newcommand{\area}{\operatorname{area}}
 \newcommand{\eps}{\epsilon}
 \newcommand{\Tan}{\operatorname{Tan}}
\newcommand{\ee}{\mathbf e}
\newcommand{\Hh}{\mathcal{H}}
\def\begfig {
\begin{figure}
\small }
\def\endfig {
\normalsize
\end{figure}
}
    \newtheorem{theorem}    {Theorem}   
    \newtheorem{lemma}      [theorem]       {Lemma}
    \newtheorem{corollary}  [theorem]     {Corollary}
    \newtheorem{proposition}       [theorem]       {Proposition}
    \newtheorem{claim}{Claim}
    \newtheorem*{claim*}{Claim}
    \newtheorem*{theorem*}{Theorem}
    \theoremstyle{definition}
    \newtheorem{definition}  [theorem] {Definition}
    \theoremstyle{definition}
    \newtheorem{remark}   [theorem]       {Remark}
    \newtheorem*{remark*} {Remark}
\title[Boundary Singularities]{Boundary Singularities in Mean Curvature Flow and Total Curvature of Minimal Surface Boundaries}
\author{Brian White}
\date{June 12, 2021.  Revised August 10, 2022}
\begin{document}

\begin{abstract}
For hypersurfaces moving by standard mean curvature flow with fixed boundary, we show that if a tangent flow at a boundary singularity is given by a smoothly embedded shrinker, then the shrinker must be non-orientable.  We also show that
there is an initially smooth surface in $\RR^3$ that develops a boundary singularity for which the shrinker is smoothly embedded (and therefore non-orientable).  Indeed, we show that there is a nonempty open set of such initial surfaces.

Let $\kappa$ be the largest number with the following property: if $M$ is a minimal surface in $\RR^3$ bounded by a smooth simple
closed curve of total curvature $< \kappa$, then $M$ is a disk.  Examples show that $\kappa<4\pi$.  In this paper, we use mean curvature flow to show that $\kappa \ge 3\pi$.  We get a slightly larger lower bound for orientable surfaces.
\end{abstract}

\maketitle

\tableofcontents

\section{Introduction}

Suppose $M\subset \RR^{n+1}$ is a compact, smoothly embedded $n$-manifold with boundary $\Gamma$.
According to~\cite{white-mcf-boundary}, there is a standard Brakke flow
\[
   t\in [0,\infty) \mapsto M(t)
\]
with (fixed) boundary $\Gamma$ and with initial surface $M(0)=M$.  
Furthermore, if $\Gamma$ lies on the boundary of the convex hull of $M$,
 then
the flow is regular at the boundary for all times.

(See~\cite{white-mcf-boundary}*{5.1 and~13.2} for the definitions of Brakke flow with boundary and standard Brakke flow with boundary.  Briefly, standard Brakke flows are those that are unit-regular -- which prevents certain gratuitous vanishing -- and that take their boundaries as mod $2$ chains.  In particular, triple junction singularities do not occur in standard Brakke flow.)

In this paper, we explore boundary regularity of such a flow $M(\cdot)$ without assuming that $\Gamma$ 
lies on the boundary of the convex hull of $M$.

In particular, we show that if a shrinker corresponding to a boundary singularity is smooth and embedded, then
it must be non-orientable.
We also show that smooth, non-orientable shrinkers do arise as boundary singularities of certain smooth initial  surfaces in $\RR^3$.  Indeed, we show  that there is a nonempty open set of such initial surfaces.
See Theorem~\ref{unavoidable-theorem}.

We also apply mean curvature flow to questions in minimal surface theory:
\begin{enumerate}
\item What is the largest number $\kappa$ such that if $M$ is a smooth minimal surface in $\RR^3$ bounded by a smooth simple closed curve of total curvature $< \kappa$, then $M$ must be a disk?
\item What is the largest number $\kappa'$ such that if $M$ is an orientable smooth minimal surface in $\RR^3$ bounded by a smooth simple closed curve of total curvature $< \kappa'$, then $M$ must be a disk?
\end{enumerate}

Examples show that $\kappa< 4\pi$.  Here, we show (Theorem~\ref{3-pi-theorem}) that $\kappa>3\pi$.

Examples of Almgren-Thurston show that $\kappa'\le 4\pi$.  It is conjectured that $\kappa'=4\pi$.
We show (Theorem~\ref{sharper}) that $\kappa' > (2\pi)^{3/2}e^{-1/2} \sim 3\pi (1.014)$.  

For the existence results in this paper, we work with standard Brakke flows of two-dimensional surfaces with
entropy less than two.  Such flows are rather well-behaved: they are smooth at almost all times, and the shrinkers corresponding to tangent flows are smoothly embedded and have multiplicity one.  See Theorem~\ref{long-theorem}.

\section{A Bernstein Theorem for Orientable Boundary Shrinkers}

\begin{theorem}\label{bernstein-theorem}
Let $M\subset \RR^{n+1}$ be a smoothly embedded, 
oriented shrinker bounded by an $(n-1)$-dimensional
linear subspace $L$.  Then $M$ is a half-plane.
\end{theorem}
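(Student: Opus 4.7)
My plan is to double $M$ across the boundary subspace $L$ and apply entropy rigidity for complete self-shrinkers. Let $\rho\colon\mathbb{R}^{n+1}\to\mathbb{R}^{n+1}$ denote the orthogonal transformation equal to the identity on $L$ and to $-\mathrm{id}$ on $L^\perp$ (rotation by $\pi$ about $L$). Since $\rho$ is a linear isometry fixing the origin, $\rho(M)$ is again a smoothly embedded self-shrinker with boundary $L$; because $M$ is smoothly embedded with boundary $L$, it lies to one side of $L$ in its tangent plane along $L$, and so $\rho(M)\neq M$. Set $\tilde M := M\cup\rho(M)$. At each $p\in L$, $T_pM \supset L$, so $T_pM = L\oplus\mathbb{R}w$ for some unit $w \in L^\perp$; since $\rho$ sends $w\mapsto -w$, it preserves $T_pM$ setwise, whence $T_p\rho(M)=T_pM$, and the two surfaces meet along $L$ with common tangent planes.

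Next I would show that $\tilde M$ is smooth across $L$. Writing $M$ locally as the graph of a function $u$ over the common tangent plane with $u|_L=0$, the tangential derivatives of $u$ along $L$ vanish automatically, and the shrinker equation at a point of $L$ (where $x\in L$ and $\nu_M\perp L$, forcing $H=0$) reduces in graphical form to $\partial_\nu^2 u|_L = 0$. Thus the reflected graph $\tilde u(y_L,y_\nu):=-u(y_L,-y_\nu)$ describing $\rho(M)$ matches $u$ to second order across $L$, so the combined graph is $C^2$; elliptic regularity for the shrinker PDE then promotes this to analytic smoothness. The hypothesis that $M$ is oriented is used both to glue the unit normals on $M$ and $\rho(M)$ coherently and, via the strong maximum principle / unique continuation for the shrinker PDE, to rule out any tangential reintersection of $M$ with $\rho(M)$ away from $L$ (such a contact would force $M=\rho(M)$, contradicting $\rho(M)\ne M$). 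Hence $\tilde M$ is a smooth, complete, embedded self-shrinker passing through the origin.

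Because $\tilde M$ is smooth at the origin, the Gaussian density of the self-similar mean curvature flow $\tilde M_t=\sqrt{-t}\,\tilde M$ at the spacetime point $(0,0)$ equals $1$, coming from the multiplicity-one static tangent plane $T_0\tilde M$. By self-similarity and Huisken's monotonicity formula, this density equals the entropy
\[
F(\tilde M)\;:=\;\int_{\tilde M}(4\pi)^{-n/2}\,e^{-|x|^2/4}\,d\mathcal{H}^n,
\]
so $F(\tilde M)=1$. The standard rigidity that hyperplanes through the origin are the unique complete embedded self-shrinkers in $\mathbb{R}^{n+1}$ of entropy $1$ then forces $\tilde M$ to be an $n$-dimensional linear subspace. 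Since $L\subset\tilde M$, this subspace contains $L$, and $M$ is the closure of one of the two half-planes of $\tilde M$ bounded by $L$.

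The main obstacle is the smoothness and embeddedness of $\tilde M$ across $L$. Tangent-plane matching is automatic from the symmetry of the construction, but upgrading to $C^2$-matching requires extracting $\partial_\nu^2 u|_L=0$ from the shrinker equation, and the orientability of $M$ is essential for excluding spurious intersections of $M$ with $\rho(M)$ so that the doubling yields a genuinely embedded self-shrinker. Once this step is in hand, the entropy computation and the hyperplane rigidity are direct applications of standard results.
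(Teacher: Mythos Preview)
Your doubling construction is natural---indeed the paper uses it in Theorem~\ref{doubling}---but the entropy step contains a genuine error. The Gaussian density of the self-shrinking flow $t\mapsto\sqrt{-t}\,\tilde M$ at the spacetime point $(0,0)$ is \emph{not} determined by the local geometry of $\tilde M$ near the spatial origin: it is obtained by parabolic blow-up at the terminal singularity, and for a self-similar flow that blow-up simply reproduces the flow itself. Hence the tangent flow at $(0,0)$ is $t\mapsto\sqrt{-t}\,\tilde M$, not the static plane $T_0\tilde M$, and the density there equals $F(\tilde M)$ by definition. Smoothness of $\tilde M$ at $0$ only gives density $1$ at the \emph{regular} spacetime points $(0,t_0)$ with $t_0<0$; Huisken monotonicity then yields $F(\tilde M)\ge 1$, not equality. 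So the chain ``$F(\tilde M)=\Theta(0,0)=1$'' is circular. Concretely, the implication ``smooth shrinker through the origin $\Rightarrow$ plane'' is already false in the immersed category---the doubles produced in Theorem~\ref{doubling} pass through $0$ and have entropy $>2$---and it fails for embedded shrinkers in $\RR^3$ as well (for instance the high-genus desingularizations of a plane through the origin and a sphere).

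There are secondary gaps too. You rule out only \emph{tangential} contact between $M$ and $\rho(M)$; transversal intersections away from $L$ are not addressed, so embeddedness of $\tilde M$ is not established. More tellingly, the hypothesis that $M$ is orientable plays no real role in your argument, since unique continuation for the shrinker equation does not depend on a choice of orientation. The paper's proof, by contrast, uses orientability in an essential way: it guarantees that the angular $1$-form $d\theta$ about $L$ is exact on $M\setminus L$, producing a single-valued angle function whose maximum is located via a barrier argument resting on Brendle's theorem, after which the strong maximum principle forces $\theta$ to be constant.
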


\begin{proof}
We can assume that $L=\{x: x_1=x_2=0\}$.  Consider the $1$-form
\[
    d\theta = \frac{x_1\,dx_2 - x_2\,dx_1}{x_1^2 +x_2^2}.
\]
on $\RR^{n+1}\setminus L$.  

Let $C$ be an oriented closed curve in $M\setminus L$.  
The winding number of $C$ about $L$ is equal to the intersection number of $C$ and $M$.
Since we can move $C$ slightly in the direction of the unit normal to $M$ to get a curve $C'$ disjoint from $M$, we see that the winding number of $C$ about $L$ is equal to $0$.  Thus
\[
   \int_C d\theta = 0.
\]
Since this holds for any closed curve in $M\setminus L$, we see that $d\theta$ is exact on $M$.
Thus there is a single-valued function
\[
  \theta: M\setminus L \to \RR
\]
such that for $x\in M\setminus L$, 
\begin{align*}
 x_1 &= (x_1^2+x_2^2)^{1/2} \cos\theta(x), \\
 x_2 &= (x_1^2+x_2^2)^{1/2} \sin\theta(x).
\end{align*}
Note that $\theta(\cdot)$ extends continuously to $L$.

Let $M'$ be a connected component of $M$.

\begin{claim}\label{maximum-claim}
 $\theta|M'$ attains a maximum.
\end{claim}

\begin{proof}[Proof of claim~\ref{maximum-claim}]
Let $g$ be the shrinker metric on $\RR^{n+1}$.
Choose an $R$ large enough that the $g$-mean curvature vector of $\partial \BB(0,R)$ 
points outward.  (That is, choose $R>\sqrt{2n}$.)

Let $\alpha$ be the maximum of $\theta$ on $\{x\in M': |x|\le 3R\}$.  By rotating, we can assume that $\alpha=0$.

Suppose the claim is not true.  Then 
\[
    \Sigma:=\{x\in M': 0<\theta(x)< \pi\}
\]
 is a nonempty hypersurface in $\{x_2>0\}$ whose boundary lies in the hyperplane $\{x_2=0\}$.

Consider $n$-dimensional surfaces $S$ in $\{x: x_2\ge 0\}$ such that
\begin{enumerate}
\item $\partial S =  \partial \BB(0,2R) \cap \{x_2=0\}$.
\item $S$ is rotationally invariant about the $x_2$-axis.  (That is, if $\rho\in O(n+1)$ and $\rho(\ee_2)=\ee_2$, then
   $\rho(S)=S$.)
\item The region $U$ bounded by $S$ and $\BB(0,2R)\cap\{x_2=0\}$ contains $\BB(0,R)\cap\{x_2\ge 0\}$ and is 
   disjoint from $\Sigma$.
\end{enumerate}
Choose a surface $S$ that minimizes $g$-area with among all such surfaces.  Then $S$ is smooth, $g$-minimal,
and $g$-stable.  

(The restriction that $S$ be rotationally invariant ensures that $S$ is smooth; otherwise, $S$ might have an $n-7$-dimensional
singular set.   If the smoothness is not clear, note that the rotational symmetry implies that the quotient set
\[
  \{ ( |(x_1,0, x_3, \dots, x_{n+1})|, x_2) : x\in S \}
\]
is a curve; that curve must be a geodesic with respect to a certain Riemannian metric on $(0,\infty)\times\RR$.)

According to~\cite{brendle}*{Proposition~5}, $S$ is flat with respect to the Euclidean metric.
(One lets $k\to\infty$ in the statement of that proposition.)
But that is impossible since $S\setminus \partial S\subset \{x_2>0\}$ and since $\partial S$ is an $(n-1)$-sphere in $\{x_2=0\}$.
This completes the proof of the claim.
\end{proof}

By Claim~\ref{maximum-claim} and the strong maximum principle, $\theta$ is constant on $M'$.  Thus $M'$ is a half-plane with boundary $L$.
Consequently, $M$ is a union of such half-planes.  Since $M$ is embedded, it is a single half-plane.
\end{proof}

\begin{remark*} 
The proof of Theorem~\ref{bernstein-theorem} was inspired by the proof of Theorem 11.1 in the Hardt-Simon boundary regularity paper~\cite{hardt-simon-boundary}.
\end{remark*}

\section{A Bernstein Theorem for Orientable Boundary Shrinkers with Singularities}

In this section, we extend Theorem~\ref{bernstein-theorem} to possibly singular shrinkers.  
The proof is not longer, but it does use more machinery.
This section is
not used in the rest of the paper.

\newcommand{\sing}{\operatorname{sing}}
\newcommand{\reg}{\operatorname{reg}}
\begin{theorem}\label{bernstein2}
Let $M\subset \RR^{n+1}$ be a
shrinker bounded by an $(n-1)$-dimensional
linear subspace $L$.   Suppose that 
\begin{equation}\label{nullset}
 \Hh^{n-1}(\sing M) = 0
\end{equation}
and that the regular part 
\[
\reg M:=M\setminus \sing M
\]
 is orientable. Then $M$ is a half-plane.
\end{theorem}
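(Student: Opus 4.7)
The plan is to run the proof of Theorem~\ref{bernstein-theorem} on $\reg M$, using the null-set hypothesis on $\sing M$ to handle the two places where smoothness was essential: the winding-number argument that produced the single-valued angle $\theta$, and the conclusion that $M$ is a single half-plane.

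For the first step, given an oriented closed curve $C \subset \reg M \setminus L$, compactness of $C$ together with the fact that $\sing M$ is relatively closed in $M$ produce an open neighborhood of $C$ in which $M$ is a smooth embedded oriented hypersurface, so $C$ may be perturbed slightly along the unit normal to yield a curve $C'$ disjoint from $M$. The equality of the winding number of $C$ about $L$ with the intersection number of $C'$ with $M$ still holds because, under the hypothesis $\Hh^{n-1}(\sing M) = 0$, the oriented rectifiable current carried by $\reg M$ has boundary exactly $[L]$ (the singular set, having codimension at least two in $\RR^{n+1}$, cannot support any $(n-1)$-dimensional boundary mass). Consequently $\int_C d\theta = 0$, and a single-valued function $\theta : \reg M \setminus L \to \RR$ exists as in the smooth case.

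The barrier argument of Claim~\ref{maximum-claim} should then go through essentially unchanged on any connected component $M'$ of $\reg M$: the competitor surface $S$ is only required to be disjoint from the subset $\Sigma \subset \reg M'$ (it may freely cross $\sing M$), it is rotationally invariant and hence smooth, and Brendle's proposition still delivers Euclidean flatness, giving the same contradiction. The strong maximum principle at the regular maximizer, together with analyticity of the shrinker equation, forces $\theta$ to be constant on $M'$, so $M'$ is contained in some open half-plane $H'$ with $\partial H' = L$.

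The step I expect to be the main obstacle is the final conclusion that $M$ is in fact a single half-plane, since embeddedness is no longer available to rule out several sheets glued along $L$ through an $\Hh^{n-1}$-negligible singular set. Here I would exploit the null-set hypothesis once more: $L$ has positive $(n-1)$-Hausdorff measure while $L \cap \sing M$ has none, so there is a point $p \in L \cap \reg M$, and near $p$ the surface $M$ is a smooth manifold-with-boundary meeting $L$ along a single half-plane $H$. Any other connected component of $\reg M$ would lie in a distinct half-plane and could reach $L$ only through $\sing M$; combined with the fact that $[\reg M]$ is an integer-multiplicity current with $\partial [\reg M] = [L]$, this forces $[\reg M] = [H]$ and hence $M = H$.
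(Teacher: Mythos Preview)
Your route differs substantially from the paper's. You attempt to rerun the Brendle barrier argument of Claim~\ref{maximum-claim} on each component of $\reg M$, whereas the paper abandons that barrier entirely and instead invokes a Frankel-type connectedness fact (Corollary~3.12 of \cite{choi-h-h-white}): for any open half-space $\mathcal H=\{ax_1+bx_2>0\}$, the set $\reg(M)\cap\mathcal H$ is connected. From this the paper argues directly that the interval $\theta(M')$ has length at most~$\pi$ (otherwise $M'\cap\{x_2<0\}$ would be disconnected), and then, if $\theta|M'$ were nonconstant, reflects $M'$ through $L$ to produce a boundaryless shrinker $M^*$ whose regular part meets $\{x_2>0\}$ in two components, again contradicting connectedness. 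No barrier, no Brendle, no maximum of $\theta$ is needed.

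Your approach has a genuine gap at the start of the barrier step. Claim~\ref{maximum-claim} begins by setting $\alpha=\max\{\theta(x):x\in M',\ |x|\le 3R\}$. In the singular setting $\reg M'\cap\overline{\BB}(0,3R)$ need not be compact (points of $\sing M$ may lie in this ball), so the maximum may fail to exist; worse, you give no reason why $\theta$ is even bounded there. A priori $\reg M'$ could wind around $L$ arbitrarily many times while accumulating at a singular point, sending $\theta\to\infty$ on a bounded set. One can rule this out using density (entropy) bounds to get only finitely many sheets near each point of $\sing M$, but that is a real argument you have not supplied, and without it the barrier construction never starts. A second, smaller issue: once $\sing M$ punctures $\Sigma$ inside $\{x_2>0\}$, the obstacle for $S$ is no longer a smooth $g$-minimal hypersurface closed in $\{x_2>0\}$, so the assertion that the constrained minimizer $S$ is $g$-minimal and \emph{freely} $g$-stable (which is what Brendle's proposition requires) needs more care than ``$S$ may freely cross $\sing M$.'' The paper's connectedness argument sidesteps both difficulties.
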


(By definition, $\reg M$ is a smooth, properly embedded manifold-with-boundary
 in $\RR^{n+1}\setminus (\sing M)$, the boundary being $L\setminus \sing M$.)

\begin{proof}
As in the proof of Theorem~\ref{bernstein-theorem}, we assume that $L$ is the subspace 
  $\{x_1=x_2=0\}$.  As in that proof, there is a smooth function
\[
  \theta: \reg M \to \RR
\]
such that
\begin{align*}
 x_1 &= (x_1^2+x_2^2)^{1/2} \cos\theta(x), \\
 x_2 &= (x_1^2+x_2^2)^{1/2} \sin\theta(x).
\end{align*}

\begin{claim}\label{connected-claim}
Let $\mathcal{H}$ be the halfspace $ax_1+bx_2>0$, where $a$ and $b$ are not both zero.
Then $\reg(M)\cap \mathcal{H}$ is connected.
\end{claim}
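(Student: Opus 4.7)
The plan is to argue by contradiction, mirroring the barrier construction of Claim~\ref{maximum-claim}. Suppose $\reg(M)\cap\mathcal{H}$ has at least two distinct connected components $M_1, M_2$; after rotating in the $(x_1,x_2)$-plane (which preserves $L$), I assume $\mathcal{H}=\{x_2>0\}$.

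Pick $R>\sqrt{2n}$ large enough that both $M_i\cap \BB(0,R)$ are nonempty, and choose $p\in M_1\cap \BB(0,R/2)$. Consider $n$-surfaces $S\subset\{x_2\ge 0\}\cap\overline{\BB(0,2R)}$ that are rotationally invariant about the $x_2$-axis, with boundary $\partial \BB(0,2R)\cap\{x_2=0\}$, and such that the region $U$ bounded by $S$ together with $\BB(0,2R)\cap\{x_2=0\}$ contains $p$ but is disjoint from $M_2$. Minimize $g$-area over this class. Rotational symmetry reduces the minimization to a weighted geodesic problem in a half-plane, so the minimizer $S^{*}$ is smooth; by construction it is $g$-minimal and $g$-stable. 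Brendle's Proposition~5 in \cite{brendle} (taking $k\to\infty$) then forces $S^{*}$ to be Euclidean-flat. But the only Euclidean-flat $n$-surface in $\{x_2\ge 0\}$ with the prescribed $(n-1)$-spherical boundary is the flat disk $\BB(0,2R)\cap\{x_2=0\}\subset\{x_2=0\}$, which cannot contain the point $p\in\{x_2>0\}$. This is the desired contradiction.

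The main difficulty is ensuring that the admissibility class is nonempty: rotational symmetry forces the entire $SO(n)$-orbit of $p$ (an $(n-1)$-sphere about the $x_2$-axis) to lie in $U$ and hence to miss $M_2$, and a priori this orbit may intersect $M_2$ even though $p$ itself does not. To handle this, I would replace each $M_i$ by its rotational saturation $\tilde M_i$ under $SO(n)$ fixing the $x_2$-axis, pass to the quotient half-plane $\{(r,x_2):r\ge 0,\ x_2>0\}$ with the push-forward of the $g$-weight, and run the same barrier argument there. The quotient minimizer is automatically smooth, and its lift to $\RR^{n+1}$ provides the rotationally symmetric $S^{*}$ to which Brendle's rigidity applies, delivering the contradiction in general.
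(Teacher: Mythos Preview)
Your barrier argument has a genuine gap that your proposed fix does not close. In Claim~\ref{maximum-claim} the obstacle $\Sigma$ was known to avoid a large ball about the origin, so a rotationally symmetric starting surface (a hemisphere) was admissible, and the effective obstacle---the rotational saturation of $\Sigma$---still lay outside that ball. Here you have no such separation: $M_1$ and $M_2$ are merely two components of $\reg(M)\cap\{x_2>0\}$, neither of which has any rotational symmetry about the $x_2$-axis. Passing to the quotient half-plane $\{(r,x_2)\}$ does not help, because the quotient map $x\mapsto(|(x_1,x_3,\dots,x_{n+1})|,x_2)$ can send points of $M_1$ and $M_2$ to the \emph{same} point; there is no reason the images $\pi(M_1)$ and $\pi(M_2)$ are disjoint, so in general no curve in the quotient separates $\pi(p)$ from $\pi(M_2)$, and the admissible class is empty. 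Equivalently, the $SO(n)$-orbit of your chosen $p\in M_1$ may meet $M_2$, and then no rotationally symmetric region $U$ can contain $p$ while avoiding $M_2$.

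Even when the admissible class happens to be nonempty, a second problem arises: the effective obstacle for a rotationally symmetric minimizer is the saturation $\tilde M_2$, which is \emph{not} a $g$-minimal hypersurface. Where $S^*$ touches $\tilde M_2$ you cannot invoke the strong maximum principle to conclude free minimality, so you only get one-sided stability at contact points; this is not enough to feed into \cite{brendle}*{Proposition~5}. The paper does not attempt a barrier argument here at all: it simply cites \cite{choi-h-h-white}*{Corollary~3.12}, whose proof is a Frankel-type connectedness argument for $f$-minimal hypersurfaces in the Gaussian space (two $g$-minimal hypersurfaces, or here two components with free boundary in the $g$-totally-geodesic hyperplane $\partial\mathcal{H}$, must intersect). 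That line of reasoning---second variation of a shortest connecting curve, or the parabolic Frankel argument---is what you need, not the Brendle barrier.
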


This is stated for shrinkers without boundary in~\cite{choi-h-h-white}*{Corollary~3.12}.
  But the same proof gives Claim~\ref{connected-claim}.

Let $M'$ be a connected component of $\reg(M)$.  Thus $\theta(M')$ is an interval.

First, we claim that $I$ has length at most $\pi$.  For if not, by rotating we can assume that 
\[
  \inf_{M'} \theta< 0
\]
and that
\[
 \sup_{M'}\theta> \pi.
\]
But then $\{x\in M': \theta(x)\in (-\pi,0)\}$ and $\{x\in M': \theta(x) \in (\pi,2\pi)\}$ are nonempty and lie
in different components of $M'\cap \{x_2<0\}$, contradicting Claim~\ref{connected-claim}.

Next, we claim that $M'$ is a half-plane.  For suppose not.  Then $\theta|M'$ does not attain a maximum
or a minimum by the strong maximum principle.  Thus the interval $I=\theta(M')$ is an open interval
of length at most $\pi$.  By rotating, we can assume that
\[
  \theta(M') = (-\alpha,\alpha)
\]
for some $\alpha$ with $0<\alpha\le \pi/2$.  Now rotate $M'$ by $\pi$ about $L$ to get $M''$. Then
$M^*:=\overline{M'\cup M''}$ is an embedded shrinker without boundary.  Furthermore, $\reg(M^*)\cap \{x_2>0\}$ is 
not connected, which is impossible by Claim~\ref{connected-claim} (applied to~$M^*$).

We have shown that each component of $\reg(M)$ is a half-plane bounded by $L$.  
By~\eqref{nullset}, there can be at most one such half-plane.
\end{proof}

\section{Basic Properties of Entropy and Total Curvature}

Suppose that $\Gamma$ is an $(m-1)$-dimensional submanifold of $\RR^n$ and that $v\in \RR^n$.  We define the cone $C_{\Gamma,v}$ and the exterior cone $E_{\Gamma,v}$ over $\Gamma$ with vertex $v$ by
\begin{align*}
C_{\Gamma,v} &= \{ v + s(x-v): x\in \Gamma, \, s\in [0,\infty)\}, \\
E_{\Gamma,v} &= \{ v + s(x-v): x\in \Gamma, \, s\in [1,\infty).
\end{align*}

We will also use $C_{\Gamma,v}$ and $E_{\Gamma,v}$ to denote the corresponding Radon measures on $\RR^n$
(counting multiplicity).  Thus
\begin{align*}
C_{\Gamma,v} f  &=  \int_{p\in \RR^n} f(p)\, \Hh^0\{ (x,s)\in \Gamma\times [0,\infty): v+s(x-v)=p\} \,d\Hh^m p, \\
E_{\Gamma,v} f  &=  \int_{p\in \RR^n} f(p)\, \Hh^0\{ (x,s)\in \Gamma\times [1,\infty): v+s(x-v)=p\} \,d\Hh^m p.
\end{align*}
Now $C_{\Gamma,v}$ and $E_{\Gamma,v}$ depend continuously on $v$ for $v\in \RR^n\setminus \Gamma$.
However, the dependence is not continuous at $v\in \Gamma$.  For suppose $v_i\notin \Gamma$ converges
to $v\in \Gamma$.
Then, after passing to a subsquence,  
  $C_{\Gamma,v_i}$ converges to the union of $C_{\Gamma,v}$ and  a half-plane
bounded by $\Tan(\Gamma,v)$, and similarly for $E_{\Gamma,v_i}$.

Consequently,
\begin{equation}\label{cone-quantity}
   v\in \RR^n \mapsto  \frac{C_{\Gamma,v}\BB(v,r)}{\omega_m r^m} + \frac12 1_{\Gamma}(v)
\end{equation}
is continuous in $v$.  (Since $C_{\Gamma,v}$ is conical, the quantity~\eqref{cone-quantity} is independent of $r$.)
Here $1_\Gamma(\cdot)$ is the characteristic function (or indicator function) of $\Gamma$.
\newcommand{\vis}{\operatorname{vis}}

We define the {\bf vision number} $\vis(\Gamma)$ of $\Gamma$ to be the supremum of the
 quantity~\eqref{cone-quantity} 
 over $v\in \RR^n$.  If $\Gamma$ is compact, then the supremum is attained by a $v$ in the convex hull of $\Gamma$.

\begin{remark}
Because the quantity~\eqref{cone-quantity} depends continuously on $v$, 
\begin{align*}
\sup_v \left(\frac{C_{\Gamma,v}\BB(v,r))}{\omega_m r^m} + \frac12 1_{\Gamma}(v) \right)
&=
\sup_{v\notin \Gamma} \left( \frac{C_{\Gamma,v}\BB(v,r))}{\omega_m r^m} + \frac12 1_{\Gamma}(v) \right) \\
&=
\sup_{v\notin\Gamma}\frac{C_{\Gamma,v}\BB(v,r))}{\omega_m r^m} \\
&\le
\sup_v \frac{C_{\Gamma,v}\BB(v,r))}{\omega_m r^m}  \\
&\le
\sup_v \left(\frac{C_{\Gamma,v}\BB(v,r))}{\omega_m r^m} + \frac12 1_{\Gamma}(v) \right).
\end{align*}
Since the first and last expressions are the same, in fact equality holds.
Thus in the definition of vision number, it does not matter whether or not we include the term $\frac121_\Gamma(p)$.
\end{remark}

\begin{definition}\label{entropy-def}
If $M$ is a Radon measure on $\RR^n$ and if $\Gamma$ is a smooth $(m-1)$-dimensional manifold in $\RR^{n}$,  then
the {\bf entropy} of the pair $(M,\Gamma)$ is
\[
e(M;\Gamma)
=
\sup_{v\in \RR^n, \lambda>0} (M + E_{v,\Gamma}) \psi_{v,\lambda},
\]
where
\[
 \psi_{v,\lambda}(x) = \frac1{(4\pi \lambda)^{m/2}} \exp\left( - \frac{|x-v|^2}{4\lambda} \right).
\]
\end{definition}

\begin{theorem}[\cite{white-mcf-boundary}, Theorem~8.1]
Suppose that
\[
   t\in I \mapsto M(t) 
\]
is a Brakke flow with boundary $\Gamma$ in $\RR^n$.  Then $e(M(t);\Gamma)$ is a decreasing function of $t$.
\end{theorem}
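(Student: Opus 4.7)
The plan is a Huisken-style monotonicity argument, where the exterior cone $E_{v_0,\Gamma}$ plays the role of a ``boundary cap'' whose $\lambda$-derivative accounts exactly for the boundary flux of the moving surface.

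First I would reduce to a pointwise quantity. Fix $v_0\in\RR^n$ and $\lambda_0>0$, set $\lambda(t)=\lambda_0+(t_2-t)$ on $[t_1,t_2]\subset I$, and define
\[
\Phi(t) = (M(t)+E_{v_0,\Gamma})\,\psi_{v_0,\lambda(t)}.
\]
If $\Phi$ is nonincreasing, then
\[
(M(t_2)+E_{v_0,\Gamma})\,\psi_{v_0,\lambda_0} \le (M(t_1)+E_{v_0,\Gamma})\,\psi_{v_0,\lambda_0+(t_2-t_1)},
\]
and taking $\sup_{v_0,\lambda_0}$ of both sides gives $e(M(t_2);\Gamma)\le e(M(t_1);\Gamma)$, since the shifted parameter $\lambda_0+(t_2-t_1)$ still lies in $(0,\infty)$.

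Next I would compute $\Phi'(t)$ in two pieces. For the moving piece $M(t)\,\psi_{v_0,\lambda(t)}$, the Brakke flow inequality with the boundary condition from~\cite{white-mcf-boundary}, together with the heat-kernel identity $\partial_t\psi = -\partial_\lambda\psi = -\Delta\psi$ and integration by parts on $M(t)$, yields the standard Huisken negative-definite bulk term
\[
-\int \Bigl|\vec{H}+\tfrac{(x-v_0)^\perp}{2\lambda(t)}\Bigr|^2 \psi\,dM(t)
\]
plus a boundary contribution $\mathcal{B}(t)$, a flux of $\nabla\psi_{v_0,\lambda(t)}$ across $\Gamma$ in the conormal direction along $M(t)$. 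For the static piece,
\[
\frac{d}{dt}\bigl[E_{v_0,\Gamma}\,\psi_{v_0,\lambda(t)}\bigr] = -\int \partial_\lambda\psi_{v_0,\lambda(t)}\,dE_{v_0,\Gamma}.
\]
The decisive step is to establish $\int \partial_\lambda\psi\,dE_{v_0,\Gamma} = \mathcal{B}(t)$, so that the two boundary contributions cancel and only the nonpositive bulk term remains.

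To verify this identity, I would use the radial parametrization $(x,s)\in\Gamma\times[1,\infty)\mapsto v_0+s(x-v_0)$ of $E_{v_0,\Gamma}$. The key observation is that the conical symmetry together with the Gaussian's scaling under $x\mapsto v_0+s(x-v_0)$ lets one rewrite $\partial_\lambda\psi_{v_0,\lambda}$, along each ray from $v_0$, as a total $s$-derivative; integration from $s=1$ to $s=\infty$ then collapses the volume integral over $E_{v_0,\Gamma}$ to a boundary integral over $\Gamma$ at $s=1$, and a direct calculation matches its integrand to the conormal-flux integrand in $\mathcal{B}(t)$. The main obstacle is making this cancellation rigorous at the level of standard Brakke flows, where $M(t)$ need not be smooth and the boundary behavior is formulated only measure-theoretically~\cite{white-mcf-boundary}; this calls for a localization argument that separates interior and near-$\Gamma$ contributions in $M(t)\psi$ and then invokes the precise form of the Brakke inequality with boundary to justify the integration by parts that produces $\mathcal{B}(t)$ in the first place.
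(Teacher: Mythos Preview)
The paper does not prove this statement; it merely quotes it from~\cite{white-mcf-boundary}*{Theorem~8.1}. So there is no in-paper argument to compare against, only the original one in that reference.

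Your overall strategy --- Huisken monotonicity with the exterior cone serving as a boundary cap --- is the correct one, and your reduction to fixed $(v_0,\lambda_0)$ with $\lambda(t)=\lambda_0+(t_2-t)$ is fine. But the ``decisive step'' as you state it is wrong: one cannot have
\[
\int \partial_\lambda\psi\,dE_{v_0,\Gamma} = \mathcal B(t).
\]
The boundary flux $\mathcal B(t)$ involves the outward conormal $\nu_{M(t)}$ of $M(t)$ along $\Gamma$, which depends on the unknown flow and varies with $t$; the cone integral on the left depends only on $\Gamma$, $v_0$, and $\lambda$. Concretely, carrying out the Huisken computation with the divergence theorem on $M(t)$ gives
\[
\mathcal B(t) = \int_\Gamma \frac{\nu_{M(t)}\cdot(x-v_0)}{2\lambda}\,\psi\,d\Hh^{m-1},
\]
whereas your radial-parametrization calculation on $E_{v_0,\Gamma}$ (which is otherwise correct) collapses to
\[
\int \partial_\lambda\psi\,dE_{v_0,\Gamma} = \int_\Gamma \frac{\bigl|(x-v_0)^{\perp T_x\Gamma}\bigr|}{2\lambda}\,\psi\,d\Hh^{m-1}.
\]
These two integrands agree only in the exceptional situation where $\nu_{M(t)}$ points along the projection of $x-v_0$ onto $(T_x\Gamma)^\perp$ at every boundary point; your proposed ``direct calculation'' would therefore fail to match.

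The fix is immediate once seen: since $\nu_{M(t)}$ is a unit vector orthogonal to $T_x\Gamma$, Cauchy--Schwarz gives
\[
\nu_{M(t)}\cdot(x-v_0) = \nu_{M(t)}\cdot(x-v_0)^{\perp T_x\Gamma} \le \bigl|(x-v_0)^{\perp T_x\Gamma}\bigr|,
\]
and hence $\mathcal B(t) \le \int\partial_\lambda\psi\,dE_{v_0,\Gamma}$. It is this \emph{inequality}, not an identity, that yields $\Phi'(t)\le 0$. This is also why the exterior cone is the natural cap: its conormal realizes the extremal value of the boundary integrand among all surfaces with boundary $\Gamma$.
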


\begin{theorem}\label{entropy-vision-theorem}
Suppose that $\Gamma$ is a smooth $(m-1)$-dimensional manifold in $\RR^n$ and that
$t\in [T_0,T] \mapsto M(t)$ is an $m$-dimensional Brakke flow with boundary $\Gamma$.
Then
\[
   e(M(t);\Gamma) \le \frac{\area(M(T_0))}{(4\pi (t-T_0))^{m/2}} + \vis(\Gamma).
\]
\end{theorem}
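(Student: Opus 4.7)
The plan is to reduce the entropy bound to separate estimates for the two summands of the monotone quantity, using the Huisken-type monotonicity formula for Brakke flows with boundary.

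Fix $t \in [T_0, T]$ together with any $v \in \RR^n$ and $\lambda > 0$ appearing in the defining sup for $e(M(t);\Gamma)$. The boundary-version monotonicity formula from \cite{white-mcf-boundary} --- whose integrated form underlies the decrease of $e(M(\cdot);\Gamma)$ cited above as \cite{white-mcf-boundary}*{Theorem~8.1} --- says that for the spacetime basepoint $(v, t+\lambda)$ the quantity
\[
s \mapsto (M(s) + E_{\Gamma, v})\,\psi_{v,\,(t+\lambda) - s}
\]
is nonincreasing on $[T_0, t]$. Evaluating at $s = T_0$ and $s = t$ gives the pointwise comparison
\[
(M(t) + E_{\Gamma, v})\,\psi_{v, \lambda} \le (M(T_0) + E_{\Gamma, v})\,\psi_{v,\, \lambda + (t-T_0)}.
\]

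The $M(T_0)$-term is controlled by the trivial pointwise bound $\psi_{v, s} \le (4\pi s)^{-m/2}$ combined with $\lambda > 0$:
\[
M(T_0)\,\psi_{v,\, \lambda + (t-T_0)} \le \frac{\area(M(T_0))}{(4\pi(\lambda + (t-T_0)))^{m/2}} \le \frac{\area(M(T_0))}{(4\pi(t-T_0))^{m/2}}.
\]
For the exterior-cone term, observe that $C_{\Gamma, v}$ is an $m$-dimensional conical Radon measure with vertex $v$, so the dilation $y \mapsto v + r(y-v)$ forces $C_{\Gamma, v}\,\psi_{v, s}$ to be independent of $s$ and equal to the density $C_{\Gamma, v}(\BB(v, r))/(\omega_m r^m)$. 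Since $E_{\Gamma, v}$ is a submeasure of $C_{\Gamma, v}$, for any $r > 0$ we get
\[
E_{\Gamma, v}\,\psi_{v,\, \lambda + (t-T_0)} \le C_{\Gamma, v}\,\psi_{v,\, \lambda + (t-T_0)} = \frac{C_{\Gamma, v}(\BB(v, r))}{\omega_m r^m} \le \vis(\Gamma),
\]
the last inequality being immediate from the definition of $\vis(\Gamma)$ (the $\tfrac12 1_\Gamma(v)$ correction is nonnegative, so can only help). Adding the two bounds and taking $\sup_{v,\lambda}$ yields the stated estimate.

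The main obstacle is not in these scalar estimates but in invoking the correct boundary monotonicity: one needs the pointwise two-time comparison of the shifted Gaussian weighted mass, not merely the time-monotonicity of the entropy functional. Once that is in place, the area term is handled by the universal sup-bound on $\psi_{v,s}$ and the cone term by the scale-invariance of Gaussian integrals against conical measures, which is precisely what makes the quantity $\vis(\Gamma)$ the natural upper bound.
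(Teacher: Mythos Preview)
Your argument is correct and is essentially the computation that the paper defers to \cite{white-mcf-boundary}*{Corollary~7.3}: the pointwise Huisken-type monotonicity for $(M(s)+E_{\Gamma,v})\psi_{v,\tau-s}$ yields the two-time comparison, the $M(T_0)$ contribution is bounded by the sup of the kernel, and the exterior-cone contribution is dominated by the scale-invariant Gaussian mass of the full cone $C_{\Gamma,v}$, which is exactly the density appearing in $\vis(\Gamma)$. The paper's own proof is simply a one-line citation, so you have supplied the natural unpacking of that reference rather than a different route.
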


\begin{proof}
This is an immediate consequence of~\cite{white-mcf-boundary}*{Corollary~7.3}.
\end{proof}

\begin{corollary}\label{entropy-vision-corollary}
In Theorem~\ref{entropy-vision-theorem}, if $M(\cdot)$ is an ancient flow and if the area of $M(t)$ is bounded above as $t\to -\infty$,
then
\[
  e(M(t);\Gamma) \le \vis(\Gamma).
\]
\end{corollary}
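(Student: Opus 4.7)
The plan is to deduce the corollary directly from Theorem~\ref{entropy-vision-theorem} by letting the initial time tend to $-\infty$. Since $M(\cdot)$ is an ancient Brakke flow, for any fixed time $t$ we may regard $M(\cdot)$ as a flow on $[T_0,t]$ for any $T_0 < t$, and apply Theorem~\ref{entropy-vision-theorem} with this starting time. This yields
\[
 e(M(t);\Gamma) \le \frac{\area(M(T_0))}{(4\pi(t-T_0))^{m/2}} + \vis(\Gamma)
\]
for every $T_0 < t$.

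Next I would use the hypothesis that $\area(M(T_0))$ stays bounded as $T_0 \to -\infty$: say $\area(M(T_0)) \le A$ for all sufficiently negative $T_0$. Then, holding $t$ fixed and sending $T_0 \to -\infty$, we have $t - T_0 \to +\infty$, so
\[
 \frac{\area(M(T_0))}{(4\pi(t-T_0))^{m/2}} \le \frac{A}{(4\pi(t-T_0))^{m/2}} \longrightarrow 0.
\]
Taking the limit in the displayed inequality gives $e(M(t);\Gamma) \le \vis(\Gamma)$, as desired.

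There is essentially no obstacle here; the only subtlety is the legitimacy of restricting the ancient flow to $[T_0,t]$ and invoking Theorem~\ref{entropy-vision-theorem} with $M(T_0)$ as the initial surface, but this is immediate from the definition of Brakke flow with boundary. Since the monotonicity of entropy (cited just before Theorem~\ref{entropy-vision-theorem}) already makes $e(M(t);\Gamma)$ a decreasing function of $t$, the bound at the given time is not improved by this argument at earlier times, but that is fine: the conclusion is the pointwise bound $e(M(t);\Gamma) \le \vis(\Gamma)$ for every $t$ in the domain of the flow.
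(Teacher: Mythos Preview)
Your argument is correct and is exactly the intended one: the paper states this as an immediate corollary of Theorem~\ref{entropy-vision-theorem}, and the only thing to do is fix $t$, apply the theorem on $[T_0,t]$, and let $T_0\to-\infty$ using the area bound.
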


\newcommand{\tc}{\operatorname{tc}}

\begin{theorem}[\cite{EWW}*{Theorem~1.1}]\label{vision-tc-theorem}
Suppose that $\Gamma$ is a simple closed curve in $\RR^n$.  Then
\[
   \vis(\Gamma) \le \frac1{2\pi}\tc(\Gamma),
\]
where $\tc(\Gamma)$ is the total curvature of $\Gamma$.  Equality holds if and only if $\Gamma$ is a convex planar curve.
\end{theorem}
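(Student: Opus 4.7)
The plan is to translate the vision density into a length on the sphere, and then use Crofton's formula together with Rolle's theorem to compare it with the length of the tangent indicatrix.

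First, identify the cone density geometrically. For $v\notin\Gamma$ and $m=2$, the cone $C_{\Gamma,v}$ is a $2$-dimensional cone over $\Gamma$ whose link at unit distance from $v$ is the radial projection curve $\pi_v(\Gamma)\subset S^{n-1}$, where $\pi_v(x):=(x-v)/|x-v|$, counted with multiplicity. Since the density of a $2$-dimensional cone at its vertex equals the length of its link divided by $|S^1|=2\pi$,
\[
\frac{C_{\Gamma,v}\,\BB(v,r)}{\pi r^2}\;=\;\frac{\Length(\pi_v(\Gamma))}{2\pi}.
\]
The boundary case $v\in\Gamma$ follows by continuity: as $v'\to v$ with $v'\notin\Gamma$, $\pi_{v'}(\Gamma)$ acquires an extra great half-circle in the direction $\Tan(\Gamma,v)$ contributing density $\tfrac12$, exactly matching the $\tfrac12\,1_\Gamma(v)$ in the definition of $\vis$. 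So it suffices to show $\Length(\pi_v(\Gamma))\le \tc(\Gamma)$ for $v\notin\Gamma$.

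Second, apply Crofton on $S^{n-1}$: for any rectifiable closed curve $C\subset S^{n-1}$,
\[
\Length(C)=c_n\int_{\omega\in S^{n-1}}\#\bigl(C\cap\omega^\perp\bigr)\,d\omega
\]
for a fixed positive constant $c_n$ depending only on $n$. Apply this both to $C=\pi_v(\Gamma)$ and to the tangent indicatrix $T:\Gamma\to S^{n-1}$, whose length is $\tc(\Gamma)$ by definition. Setting $f_\omega(x):=(x-v)\cdot\omega$, so that $f_\omega'=T\cdot\omega$ along $\Gamma$, the two Crofton integrands become $\#\{f_\omega=0\}$ and $\#\{f_\omega'=0\}$, respectively. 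The length inequality therefore reduces to the pointwise comparison
\[
\#\{x\in\Gamma:f_\omega(x)=0\}\;\le\;\#\{x\in\Gamma:f_\omega'(x)=0\}
\]
for a.e.\ $\omega$. This is Rolle on a circle: for a.e.\ $\omega$ the value $0$ is regular for $f_\omega$ (Sard applied to $(x,\omega)\mapsto(x-v)\cdot\omega$), so its zero set is finite and transverse, partitioning $\Gamma\cong S^1$ into arcs on which $f_\omega$ alternates sign; each such arc must contain an interior extremum of $f_\omega$, hence a zero of $f_\omega'$. Integrating via Crofton yields the main inequality.

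The main obstacle, as I see it, is the equality characterization. Equality in the Rolle step forces, for a.e.\ $\omega$, exactly one critical point of $f_\omega$ between consecutive zeros, and hence (since the global max and min of $f_\omega$ are themselves critical points) exactly two critical points of $f_\omega$ on $\Gamma$. Translating this "every height function has one max and one min" condition into the geometric statement on $\Gamma$ --- that $\Gamma$ is planar (else a direction $\omega$ normal to an osculating plane in a region of nontrivial torsion produces extra critical points) and convex within its plane (else a standard support-line argument produces a direction with four critical points) --- is the delicate part. The converse is immediate: for planar convex $\Gamma$ with $v$ interior, $\pi_v$ is a degree-one homeomorphism onto the unit circle in the plane of $\Gamma$, so $\Length(\pi_v(\Gamma))=2\pi=\tc(\Gamma)$, and equality holds.
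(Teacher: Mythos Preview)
The paper does not prove this theorem; it is quoted as Theorem~1.1 of \cite{EWW}. Your proof of the inequality is correct and is essentially the argument in that reference: the cone density at $v\notin\Gamma$ equals $\Length(\pi_v(\Gamma))/(2\pi)$, Crofton on $S^{n-1}$ converts both sides into integrals over $\omega\in S^{n-1}$, and Rolle on $S^1$ gives the pointwise bound $\#\{f_\omega=0\}\le\#\{f_\omega'=0\}$ for a.e.\ $\omega$.

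There is, however, a genuine gap in your equality sketch. From $\#\{f_\omega=0\}=\#\{f_\omega'=0\}$ a.e.\ you correctly get exactly one critical point of $f_\omega$ in each arc between consecutive zeros, but ``hence exactly two critical points of $f_\omega$'' does not follow: if $f_\omega$ has four transverse zeros there are four arcs and hence four critical points, the global maximum and minimum being merely two of them. What the Rolle equality does yield is $\#\{f_\omega=0\}=\#\{f_\omega'=0\}\ge 2$ a.e.\ (so every hyperplane through $v$ meets $\Gamma$, placing $v$ in the convex hull of $\Gamma$), but forcing this common value down to $2$ for a.e.\ $\omega$ --- which, via the equality case of Fenchel's theorem, is what ``planar convex'' amounts to --- requires a further argument that you have not supplied. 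Your torsion remark does not close the gap either: exhibiting a single direction $\omega$ with extra critical points is harmless, since the equality of Crofton integrands is only asserted almost everywhere, and in any case ``extra'' here is measured against the number of zeros of $f_\omega$, not against $2$.
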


\begin{theorem}\label{long-theorem}
Suppose that $\Gamma$ is a smooth, simple closed curve in $\RR^n$. 
Suppose that 
\[
   (a,\infty)\in \RR \mapsto M(t)
\]
is a standard Brakke flow with boundary $\Gamma$ such that 
\[
  A:=\sup_{t\in (a,\infty)} \area(M(t))=\lim_{t\to a}\area(M(t)) < \infty,
\]
and that
\[
  \sup_t e(M(t);\Gamma) < 2.
\]
Then
\begin{enumerate}[\upshape (1)]
\item\label{shrinker-smooth}
At each spacetime point, each tangent flow is given by a smooth, multiplicity $1$ shrinker.
\item\label{most-times} The flow $M(\cdot)$ is regular at almost all times.  
\item\label{shrinker-non-orientable}
If $n=3$, then for each tangent flow
at a boundary singularity, 
the corresponding shrinker is non-orientable.  
\item\label{future} Every sequence of times tending to $\infty$ has a subsequence $t(i)$ for which $M(t(i))$ converges smoothly
to an embedded minimal surface.
\item\label{unique-future} As $t\to\infty$, $M(t)$ converges smoothly to an embedded minimal surface $M(\infty)$.
\item\label{unique-past} If $a=-\infty$, then as $t\to -\infty$, $M(t)$ converges smoothly to an embedded 
 minimal surface $M(-\infty)$.
\end{enumerate}
\end{theorem}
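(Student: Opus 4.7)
The plan is to use the boundary monotonicity formula of \cite{white-mcf-boundary} together with the strict entropy bound $e(M(t);\Gamma) < 2$ to analyze tangent flows, and to exploit the gradient-flow structure of mean curvature flow to handle long-time behavior. For \eqref{shrinker-smooth}, monotonicity implies every tangent flow at a spacetime point is a (possibly boundary) self-similar shrinker whose Gaussian density is at most the entropy of the original flow, hence strictly less than $2$. Multiplicity one is immediate, since multiplicity $\geq 2$ would force density $\geq 2$. For smoothness, I would take a further tangent flow at any hypothetical singular point of the shrinker, obtaining an integral, standard, stationary cone with density still below $2$; in the two-dimensional setting (and its boundary analogue where the cone carries a line of boundary), Allard-type regularity together with the exclusion of triple junctions provided by unit-regularity of standard Brakke flows forces the cone to be a plane (respectively a half-plane). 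Iterating this rules out singularities of the shrinker entirely.

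For \eqref{most-times}, the singular times form the projection onto the time axis of the spacetime singular set; since \eqref{shrinker-smooth} gives smooth multiplicity-one tangent flows everywhere, standard parabolic-dimension bounds on the spacetime singular set imply this projection has measure zero. For \eqref{shrinker-non-orientable}, by \eqref{shrinker-smooth} a boundary tangent shrinker is a smooth, multiplicity-one shrinker in $\RR^3$ bounded by a line $L$; if it were orientable, Theorem \ref{bernstein-theorem} would force it to be a half-plane. But a half-plane tangent flow at a boundary point is exactly the signature of a regular boundary point in White's boundary regularity theorem for Brakke flows, contradicting the assumption that we are at a singularity.

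For parts \eqref{future}--\eqref{unique-past}, the gradient identity $\tfrac{d}{dt}\area(M(t)) = -\int_{M(t)}|H|^2$ together with the area bound $A$ forces $\int_{M(t_i)}|H|^2 \to 0$ along some sequence $t_i \to \infty$. Combined with the entropy bound and the smoothness/multiplicity-one output of \eqref{shrinker-smooth}, this upgrades, via the compactness of low-entropy standard Brakke flows, to smooth subsequential convergence to a smooth embedded minimal surface with boundary $\Gamma$, giving \eqref{future}. For \eqref{unique-future}, I would establish uniqueness of the limit via a \L{}ojasiewicz--Simon inequality for the area functional on surfaces with fixed boundary $\Gamma$, applied in a neighborhood of the candidate limit; this is the main technical obstacle, since the limit minimal surface need not be integrable. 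Part \eqref{unique-past} is parallel: for an ancient flow with bounded area one controls $\int_{-\infty}^{t_0}\int|H|^2\,dt$ by $A$, and the same subsequential convergence together with the \L{}ojasiewicz--Simon argument give smooth convergence as $t \to -\infty$ to a minimal surface $M(-\infty)$.
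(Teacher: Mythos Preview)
Your proposal is essentially correct and tracks the paper's argument closely: tangent-cone analysis plus Allard regularity for \eqref{shrinker-smooth}, stratification for \eqref{most-times}, Theorem~\ref{bernstein-theorem} for \eqref{shrinker-non-orientable}, and \L{}ojasiewicz--Simon for \eqref{unique-future}--\eqref{unique-past}.

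Two points where the paper is sharper than your sketch. First, for \eqref{most-times} you invoke ``standard parabolic-dimension bounds'' directly from \eqref{shrinker-smooth}, but stratification needs one more input: you must rule out that a boundary tangent shrinker has a direction of translational invariance (otherwise the boundary singular set could be too large). The paper handles this explicitly (Corollary~\ref{half-plane-corollary}): a smooth boundary shrinker with a translational symmetry must be a half-plane, hence regular, so translational invariance only occurs at interior points, where the shrinker is a cylinder. Second, for \eqref{future} your route via $\int_{M(t_i)}|H|^2\to 0$ plus ``compactness of low-entropy Brakke flows and the output of \eqref{shrinker-smooth}'' is workable but slightly muddled---\eqref{shrinker-smooth} concerns tangent flows at finite spacetime points, not long-time limits. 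The paper instead passes to a subsequential limit of the \emph{time-translated flows} $t\mapsto M(t(i)+t)$, obtaining an eternal standard Brakke flow with constant area, hence a fixed stationary integral varifold $V$; then \eqref{most-times} applied to this limit flow immediately gives that $V$ is smooth, and local regularity upgrades the convergence. This is cleaner because it recycles \eqref{most-times} rather than redoing an Allard-type argument for $V$.
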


\begin{proof}
See Appendix~\ref{regularity-appendix} for the proofs of Assertions~\ref{shrinker-smooth} and~\ref{most-times}.
In particular, Lemmas~\ref{plane-lemma} and~\ref{half-plane-lemma} give Assertion~\ref{shrinker-smooth}, and
Proposition~\ref{most-times-proposition} gives Assertion~\ref{most-times}.

In Assertion~\ref{shrinker-non-orientable}, the non-orientability of $\Sigma$ follows immediately
 from Assertion~\ref{shrinker-smooth} and Theorem~\ref{bernstein-theorem}.

To prove Assertion~\ref{future}, suppose $t(i)\to \infty$. By passing to a subsequence, we may assume that the flows
\[
   t\mapsto M(t(i)+t)
\]
converge to a standard limit Brakke flow
\[
  t\in \RR \mapsto M'(t)
\]
with boundary $\Gamma$. 
(This is by the compactness theory for standard Brakke flows with boundary: see Theorems~10.2, 10.2, and~13.1 and Definition~13.2 in 
\cite{white-mcf-boundary}.)  
 Since the area of $M'(t)$ is independent of $t$ (it is equal to $\lim_{t\to\infty}\area(M(t))$),
it follows that $M'(t)$ is a stationary integral varifold $V$ independent of $t$.  
By Assertion~\ref{most-times} applied to the flow~$M'(\cdot)$, the surface $M'(t)$ is smoothly embedded for almost all $t$.  Thus $V$ is smoothly embedded.

The uniqueness in Assertion~\ref{unique-future} follows from Assertion~\ref{future} 
and from the Lojasiewicz-Simon inequality~\cite{simon-asymptotics}*{Theorem~3}.
See Theorem~\ref{unique} below.

The proof of Assertion~\ref{unique-past} is the same as the proof of Assertion~\ref{unique-future}.
\end{proof}

\section{A general existence theorem for eternal flows}

In the following two theorems, 
$\Ff$ is a set of $C^1$ compact manifolds-with-boundary in $\RR^3$ with the following property:
if $M\in \Ff$ and if $M'$ is isotopic to $M$, then $M'\in \Ff$.
 For example, $\Ff$ might be the set
of non-orientable surfaces, or the set of genus-one orientable surfaces, etc.

\begin{theorem}\label{general-existence-theorem}
Suppose that $\Gamma$ is a smooth, simple closed curve in $\RR^3$ of total curvature at most $4\pi$.
Suppose that $\Gamma$ bounds a minimal surface in $\Ff$ and that $\Gamma$ is a smooth limit of curves
$\Gamma_i$ such that $\Gamma_i$ does not bound a minimal surface in $\Ff$.
 
Then there is an eternal standard Brakke flow 
\[
  t\in \RR \mapsto M'(t)
\]
with boundary $\Gamma$ such that 
\begin{enumerate}[\upshape (i)]
\item\label{entropy-item} 
\[
   \sup_te(M'(t);\Gamma)< \frac1{2\pi} \tc(\Gamma) \le 2.
\]
\item $M'(t)$ converges smoothly as $t\to -\infty$ to a minimal surface $M'(-\infty)$ in $\Ff$.
\item $M'(-\infty)$ has the least area among all minimal surfaces of type $\Ff$ bounded by $\Gamma$.
\item $M'(t)$ converges smoothly as $t\to \infty$ to a minimal surface $M'(\infty)$ that is not in $\Ff$.
\end{enumerate}
\end{theorem}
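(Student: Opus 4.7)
The plan is to construct $M'(\cdot)$ as a subsequential limit of time-shifted standard Brakke flows started from surfaces in $\Ff$ bounded by $\Gamma_i$. Let $M_0\in \Ff$ be a least-area minimal surface bounded by $\Gamma$, with area $A_0$. Corollary~\ref{entropy-vision-corollary} applied to the static flow at $M_0$ combined with Theorem~\ref{vision-tc-theorem} gives $e(M_0;\Gamma)\le \vis(\Gamma)<\frac{1}{2\pi}\tc(\Gamma)\le 2$, where the strict middle inequality uses that $\Gamma$ is not convex planar---otherwise $\Gamma$, and nearby $\Gamma_i$, would bound a flat disk in $\Ff$ by isotopy invariance, contradicting the hypothesis on $\Gamma_i$. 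By isotopy invariance of $\Ff$, I can produce $\tilde M_i\in \Ff$ bounded by $\Gamma_i$ with $\tilde M_i\to M_0$ smoothly, and let $t\in[0,\infty)\mapsto M_i(t)$ be the standard Brakke flow with boundary $\Gamma_i$ starting at $\tilde M_i$. For $i$ large, $e(\tilde M_i;\Gamma_i)<2$ by continuity, so by entropy monotonicity $\sup_t e(M_i(t);\Gamma_i)<2$ and Theorem~\ref{long-theorem} yields smooth convergence $M_i(t)\to M_i(\infty)$ to a minimal surface bounded by $\Gamma_i$; by hypothesis $M_i(\infty)\notin \Ff$. Set $T_i:=\sup\{t\ge 0 : M_i(t)\in \Ff\}$, which is finite since $M_i(t)\notin \Ff$ for $t$ large.

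The key technical claim is $T_i\to\infty$. Otherwise a subsequence has $T_i\to T<\infty$, and the $M_i(\cdot)$ subconverge on $[0,\infty)$ to a standard Brakke flow starting at $M_0$ with boundary $\Gamma$, of area $\le A_0$. Since $M_0$ has least area in $\Ff$ and area is monotone, the limit must be stationary at $M_0$; parabolic regularity near a smooth stationary flow gives smooth convergence on $[0,T+1]$, whence $M_i(t)\in \Ff$ for all $t\in[0,T+1]$ and $i$ large (by openness of the isotopy class $\Ff$ in $C^1$), contradicting $T_i<T+1$. Setting $M_i'(t):=M_i(T_i+t)$ and extracting a further subsequential Brakke-flow limit gives an eternal standard Brakke flow $M'(\cdot)$ on $\RR$ with boundary $\Gamma$. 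For (i), Theorem~\ref{entropy-vision-theorem} with initial time $0$ gives $e(M_i'(t);\Gamma_i)\le \area(\tilde M_i)/(4\pi(T_i+t)) + \vis(\Gamma_i)$; sending $i\to\infty$ yields $e(M'(t);\Gamma)\le \vis(\Gamma)<\frac{1}{2\pi}\tc(\Gamma)$, and Theorem~\ref{long-theorem} then provides smooth limits $M'(\pm\infty)$.

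The remaining claims (ii)--(iv) concern the isotopy types of the limits. For (iv), at a regular time $t>0$ of $M'$ the convergence $M_i'(t)\to M'(t)$ is smooth and $M_i'(t)\notin \Ff$ by the sup-definition of $T_i$, so $M'(t)\notin \Ff$ by openness; taking $t\to\infty$ along regular times gives $M'(\infty)\notin \Ff$. The main obstacle is (ii)--(iii): the sup-definition of $T_i$ does not immediately force $M_i'(t)\in \Ff$ for $t<0$, since $M_i$ could transiently leave $\Ff$ before returning. The resolution is to control $M_i'(\tau)$ for each fixed $\tau<0$ via parabolic regularity and the area constraint $\area(M_i'(\tau))\le \area(\tilde M_i)\to A_0$: the area drop along $M_i$ is concentrated near $T_i$, so for $\tau$ bounded away from $0$, $M_i'(\tau)$ remains $C^1$-close to $M_0$ and therefore lies in $\Ff$ for large $i$. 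Letting $\tau\to-\infty$ along regular times then gives $M'(-\infty)\in \Ff$, and the area bound combined with the least-area property of $A_0$ forces $\area(M'(-\infty))=A_0$, establishing (ii) and (iii).
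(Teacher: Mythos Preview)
Your overall architecture matches the paper's: start from a least-area $\Ff$-surface $M_0$ bounded by $\Gamma$, approximate by $\tilde M_i \in \Ff$ bounded by $\Gamma_i$, run standard Brakke flow, time-shift by some $T_i \to \infty$, and extract an eternal limit. The crucial difference is the choice of $T_i$. The paper takes $T_i$ to be the \emph{first singular time} of $M_i(\cdot)$; then $M_i'(t) = M_i(T_i + t)$ is smooth and hence isotopic to $M_i(0) \in \Ff$ for every $t < 0$, making (ii)--(iii) immediate after passing to the limit. Conclusion (iv) is obtained by a different route: the singularity at $t = 0$ survives in the limit by local regularity, so $M'$ is nonconstant, whence $\area(M'(\infty)) < \area(M'(-\infty)) \le A_0$, and minimality of $A_0$ among $\Ff$-surfaces forces $M'(\infty) \notin \Ff$.

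Your choice $T_i = \sup\{t : M_i(t) \in \Ff\}$ reverses the trade-off: (iv) becomes direct, but (ii)--(iii) is the problem, as you yourself flag. Your proposed resolution --- that ``the area drop along $M_i$ is concentrated near $T_i$, so $M_i'(\tau)$ remains $C^1$-close to $M_0$'' --- does not hold up. Nothing rules out $M_i(\cdot)$ having a singularity at some much earlier time $S_i$ (with $S_i \to \infty$ but also $T_i - S_i \to \infty$), leaving $\Ff$, flowing for a long time with substantial area loss, then re-entering $\Ff$ through a later singularity before finally exiting for good at $T_i$. In that scenario $M_i'(\tau) = M_i(T_i + \tau)$ need not be anywhere near $M_0$, and the limit $M'(-\infty)$ could well be a minimal surface outside $\Ff$ with area strictly below $A_0$. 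The convergence of $M_i(\cdot)$ to the constant flow $M_0$ that you (correctly) use to show $T_i \to \infty$ holds only on bounded time intervals $[0,T]$ and gives no control at time $T_i + \tau \to \infty$. Switching to the first singular time as $T_i$ removes the difficulty entirely.
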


\begin{proof} 
The set of minimal surfaces in $\Ff$ bounded by $\Gamma$ is compact
(by Theorem~\ref{minimal-compact}), so there
is a surface $M$ that attains the least area $A$ among all such surfaces.

Note that $t\in\RR\mapsto M$ is a standard Brakke flow, so
\[
  e(M;\Gamma) \le \vis(\Gamma) < \frac1{2\pi}\tc(\Gamma) \le 2
\]
by Corollary~\ref{entropy-vision-corollary} and Theorem~\ref{vision-tc-theorem}.

For each $i$, let $M_i$ be a surface diffeomorphic to $M$ and bounded by $\Gamma_i$ such that $M_i$ converges
smoothly to $M$ as $i\to\infty$. Then $e(M_i;\Gamma_i)\to e(M;\Gamma)$, so by passing to a subsequence, we can assume that
\[
   e(M_i;\Gamma_i) < 2.
\]
for all $i$.

Let 
\[
  t\in [0,\infty) \mapsto M_i(t)
\]
be a standard Brakke flow with boundary $\Gamma_i$ and with initial surface $M_i(0)=M_i$.

Of course,
\begin{equation}\label{entropy-sup}
\sup_t e(M_i(t);\Gamma_i) = e(M_i;\Gamma_i)  < 2,
\end{equation}
and
\begin{equation}\label{area-sup}
   \sup_t \area(M_i(t)) = \area(M_i(0))  = \area(M_i) \to \area M = A.
\end{equation}

As $t\to\infty$, we can assume, by passing to a subsequence, that the flow $M_i(\cdot)$
converges to a standard Brakke flow $M(\cdot)$ with $M(0)=M$. Since $M$ is smooth and minimal, the only such flow is the constant flow $M(t)\equiv M$.

As $t\to \infty$, $M_i(t)$ converges smoothly to an embedded minimal surface $M_i(\infty)$ (by~\eqref{entropy-sup} and
Theorem~\ref{long-theorem}).    By hypothesis on $\Gamma_i$, the surface $M_i(\infty)$ is not in the collection $\Ff$.
Thus for all sufficiently large $t$, $M_i(t)$ is not in $\Ff$.

Since $M_i(0)$ is in $\Ff$ and since $M_i(t)$ is not in $\Ff$ for large $t$, we see that there must
be singularities in the flow.  Let $T_i$ be the first singular time.
Since the flow $t\mapsto M_i(t)$ converges to the constant flow $t\mapsto M$, we see that
\[
  T_i \to \infty.
\]

By passing to a subsequence, we can assume that the time-shifted flows
\[
   t\in [-T_i,\infty) \mapsto M_i'(t):= M_i(T_i + t)
\]
converge to an eternal standard Brakke flow
\[
  t\in \RR \mapsto M'(t)
\]
with boundary $\Gamma$.   
 By Corollary~\ref{entropy-vision-corollary} and Theorem~\ref{vision-tc-theorem}, the flow satisfies the entropy bound~\eqref{entropy-item}.

By Theorem~\ref{long-theorem}, there is a $c\in (0,\infty)$ such that
\begin{align}
&\text{$M'(t)$ is smooth for $|t|>c$}, \\
&\text{$M'(t)$ converges smoothly to a minimal surface $M'(-\infty)$ as $t\to - \infty$, and} \\
&\text{$M'(t)$ converges smoothly to a minimal surface $M'(\infty)$ as $t\to  \infty$}.
\end{align}

By choice of $T_i$, $M_i'(t)$ in $\Ff$ for $t<0$.  
By  local regularity~\cite{white-local}, $M_i'(t)$ converges smoothly
to $M'(t)$ for $t< -c$.  Thus $M'(t)$ is in $\Ff$ for $t<-c$, and hence $M'(-\infty)$ is in $\Ff$.

Since $0$ is a singular time of the flow $M_i'(\cdot)$,
 it follows (again by local regularity~\cite{white-local}) that $0$ is a singular
time of the flow $M'(\cdot)$.  Hence the flow $M'(\cdot)$ is not constant, so 
\begin{align}
\area(M'(\infty)) 
&<
\area(M'(-\infty)) \le A.
\end{align}

Since $A$ is the least area of any minimal $\Ff$-type surface bounded by $\Gamma$, 
it follows that $\area M'(-\infty)=A$ and that $M'(\infty)$ is not of type $\Ff$.
\end{proof}

\begin{theorem}\label{general-redux}
Suppose that the family $\Ff$ does not include disk-type surfaces.
Suppose there is a smooth simple closed curve $\Gamma_0$ of total curvature $< 4\pi$ that bounds a minimal
surface of type $\Ff$.  
Then there is a curve $\Gamma$ such that $\tc(\Gamma)\le \tc(\Gamma_0)$ and such that $\Gamma$ satisfies the hypotheses of Theorem~\ref{general-existence-theorem}.
\end{theorem}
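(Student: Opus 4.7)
The plan is to interpolate between $\Gamma_0$ and a round circle within the class of smooth simple closed curves of total curvature at most $\tc(\Gamma_0)$, and then take the last curve along the interpolation that still bounds an $\Ff$-type minimal surface.

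First I observe that $\tc(\Gamma_0) > 2\pi$. By Fenchel's theorem $\tc(\Gamma_0) \ge 2\pi$, with equality precisely when $\Gamma_0$ is a convex planar curve. In that case, the convex hull property forces every minimal surface with boundary $\Gamma_0$ to lie in the plane $P$ containing $\Gamma_0$; as an embedded compact $2$-manifold-with-boundary inside $P$ with boundary the simple closed curve $\Gamma_0$, such a surface is the flat disk that $\Gamma_0$ bounds. Since disks are excluded from $\Ff$ by assumption, this contradicts $\Gamma_0$ bounding an $\Ff$-type minimal surface. Hence $\tc(\Gamma_0) > 2\pi$, and a round circle $C$ satisfies $\tc(C) = 2\pi < \tc(\Gamma_0) < 4\pi$; the same convex-hull argument shows $C$ bounds no $\Ff$-type minimal surface.

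Next I would construct a continuous family of smooth simple closed curves $s \in [0,1] \mapsto \Gamma_s$ with $\Gamma_0$ the given curve, $\Gamma_1 = C$, and $\tc(\Gamma_s) \le \tc(\Gamma_0)$ for all $s$. Since $\tc(\Gamma_0) < 4\pi$, the F\'ary--Milnor theorem implies $\Gamma_0$ is unknotted and hence smoothly isotopic to $C$ through smooth simple closed curves. Such an isotopy can be arranged to have $\tc$ non-increasing, for example by applying a suitable curve-straightening flow, or by interpolating arc-length curvature profiles and locally smoothing whenever the total curvature threatens to exceed $\tc(\Gamma_0)$. I expect this path-connectedness of the sublevel set $\{\tc \le \tc(\Gamma_0)\}$ to be the main technical obstacle, but it is a purely curve-geometric question independent of the mean curvature flow machinery used elsewhere in the paper.

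Finally, let $T = \sup\{ s \in [0,1] : \Gamma_s \text{ bounds a minimal surface in } \Ff\}$. By Theorem~\ref{minimal-compact} (compactness of the space of minimal surfaces with fixed topology and smoothly varying boundary), this set is closed in $[0,1]$, so $\Gamma_T$ itself bounds an $\Ff$-type minimal surface. Since $\Gamma_1 = C$ does not, we have $T < 1$; and for any sequence $s_i \downarrow T$ with $s_i > T$, the curves $\Gamma_{s_i}$ do not bound $\Ff$-type minimal surfaces but converge smoothly to $\Gamma_T$. Setting $\Gamma := \Gamma_T$ yields the required curve: $\tc(\Gamma) \le \tc(\Gamma_0)$, $\Gamma$ bounds an $\Ff$-type minimal surface, and $\Gamma$ is a smooth limit of curves that do not. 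These are precisely the hypotheses of Theorem~\ref{general-existence-theorem}.
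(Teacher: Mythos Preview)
Your argument is correct and follows essentially the same route as the paper: deform $\Gamma_0$ to a round circle through smooth simple closed curves of total curvature $\le \tc(\Gamma_0)$, then take the supremum of parameters for which the curve still bounds an $\Ff$-type minimal surface, using Theorem~\ref{minimal-compact} for closedness. The deformation step you flag as the main technical obstacle is precisely what the paper establishes in Theorem~\ref{deformation-theorem} (via polygonal approximation, Milnor's chord-shortening, and curve-shortening flow), so your instinct that it is a separate curve-geometric lemma is exactly right.
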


\begin{proof}
By Theorem~\ref{deformation-theorem}, there is a smooth one-parameter family
\[
    s\in [0,1]\mapsto \Gamma_s
\]
of simple closed curves (starting from the given curve $\Gamma_0$) for which $\Gamma_1$ is a round circle
and for which each curve has total curvature $\le\tc(\Gamma_0)$.

Let $S$ be the set of $s\in [0,1]$ such that $\Gamma_s$ bounds a minimal surface of type $\Ff$.

By Theorem~\ref{minimal-compact}, $S$ is closed. And $S$ is nonempty since $0\in S$.  Thus $S$ has a maximum $\hat{s}$.  Note that $\hat{s}<1$.
Hence $\Gamma_{\hat{s}}$ bounds a minimal surface of type $\Ff$, and $\Gamma_{\hat{s}}$ is a smooth 
limit of the curves $\Gamma_s$, $s>\hat{s}$, that do not bound minimal surfaces of type $\Ff$.
\end{proof}

\section{Generalized M\"obius Strips}

Suppose that $M\subset \RR^3$ is a smoothly embedded, compact surface with exactly
one boundary component.  More generally, we can allow $M$ to have self-intersections
and singularities, provided they occur away from the boundary.  
That is, we only require that $\partial M$ is a smooth, simple closed curve and that $M$
is a smoothly embedded manifold-with-boundary (the boundary being $\partial M$) near $\partial M$.

Although $M$ may not be orientable, we can choose an orientation for $\partial M$.
Now push $\partial M$ slightly into $M$ to get another smooth embedded curve $C$.
  For example, if $\eps>0$ is sufficiently small, we can let
\[
  C = \{p\in M: \dist(p,\partial M)=\eps\}.
\]

We let $\lambda(M)$ be the linking number of $\partial M$ and $C$.  
This can be defined in various (equivalent) ways.  For example, let $\Sigma$ be 
a compact oriented surface (not necessarily embedded) with boundary $\partial M$.
By perturbing $\Sigma$ slightly, we can assume that $C$ intersects $\Sigma$ transversely.  Then $\lambda(M)$ is the intersection number 
(in $\RR^3\setminus \partial M$) of 
$C$ and $\Sigma$ 

Alternatively, we can let $\Sigma$ be a compact oriented surface with boundary $C$.
Then $\lambda(M)$ is the intersection number of $\partial M$ and $\Sigma$ in 
  $\RR^3\setminus C$.

(We began by choosing an orientation for $\partial M$, but the 
resulting value of $\lambda(M)$
 does not depend on that choice, since reversing the orientation of $\partial M$ also 
 reverses the orientations of $C$ and of $\Sigma$, thus leaving the value of 
$\lambda(M)$ the same.)

Of course if $M$ is smoothly embedded and orientable, then $\lambda(M)=0$, since we can let $\Sigma$
be the portion of $M$ bounded by $C$.  (Note that $\Sigma$ is an oriented surface
with boundary $C$ and that $\Sigma$ is disjoint from $\partial M$, so the linking number of $C$ and $\partial M$ is $0$.)

\begin{proposition}
If $M$ is a smoothly embedded M\"obius strip, then $\lambda(M)\ne 0$.
Indeed, $\lambda(M)/2$ is an odd integer.
\end{proposition}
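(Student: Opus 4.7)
The plan is to show $\lambda(M)/2$ is odd in two main steps: first reduce to a parity question about the linking of $\partial M$ with the core circle $\gamma$ of $M$, and then evaluate that parity using a solid-torus tubular neighborhood of $\gamma$ in $\RR^3$.

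First I would fix a smooth simple closed curve $\gamma \subset M \setminus \partial M$ representing a generator of $H_1(M \setminus \partial M) \cong \ZZ$, i.e.\ a core circle of the M\"obius strip. Because $M$ is a M\"obius strip, its boundary double-covers the core: $[\partial M] = 2[\gamma]$ in $H_1(M)$, and (since $M \setminus \partial M \hookrightarrow M$ is a homotopy equivalence) the same relation holds in $H_1(M \setminus \partial M)$. The annular collar in $M$ cobounded by $\partial M$ and $C$ gives $[C] = [\partial M] = 2[\gamma]$ in $H_1(M \setminus \partial M)$ as well. Identifying $H_1(\RR^3 \setminus \partial M) \cong \ZZ$ with the linking-number homomorphism $x \mapsto \operatorname{lk}(x, \partial M)$ and transporting the identity $[C] = 2[\gamma]$ along $H_1(M \setminus \partial M) \to H_1(\RR^3 \setminus \partial M)$, one obtains
\[
\lambda(M) \;=\; \operatorname{lk}(C, \partial M) \;=\; 2\,\operatorname{lk}(\gamma, \partial M).
\]
In particular $\lambda(M)$ is even and $\lambda(M)/2$ is an integer.

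Next I would show that $\operatorname{lk}(\gamma, \partial M)$ is odd. Choose a solid-torus tubular neighborhood $V$ of $\gamma$ in $\RR^3$, thin enough that $\partial V$ is transverse to $M$ and $N := M \cap V$ is a M\"obius-strip sub-collar of $\gamma$ in $M$. Then $M \setminus \operatorname{int}(V)$ is an embedded annulus in $\RR^3 \setminus \gamma$ cobounded by $\partial M$ and $\partial N$, so $\operatorname{lk}(\gamma, \partial M) = \operatorname{lk}(\gamma, \partial N)$. The curve $\partial N$ lies on the torus $\partial V$; write $[\partial N] = a[\mu] + b[\lambda] \in H_1(\partial V) \cong \ZZ^2$ in the meridian/Seifert-longitude basis of $V$. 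Since the boundary of a M\"obius $I$-bundle over $\gamma$ double-covers $\gamma$, the longitudinal degree is $|b| = 2$. The curve $\partial N$ is a single simple closed curve on $\partial V$ (M\"obius strips have connected boundary), and an $(a,b)$-curve on the torus is connected only when $\gcd(a,b)=1$; combined with $|b|=2$ this forces $a$ to be odd. Because $\operatorname{lk}(\mu, \gamma) = 1$ and $\operatorname{lk}(\lambda, \gamma) = 0$ by choice of Seifert longitude, we conclude $\operatorname{lk}(\gamma, \partial N) = a$ is odd, as required.

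The main obstacle is the geometric setup in the second step: arranging $V$ so that $N = M \cap V$ really is a M\"obius sub-collar with $\partial N$ a single simple closed curve on $\partial V$, and $M \setminus \operatorname{int}(V)$ an embedded annulus. This amounts to choosing the radius of $V$ smaller than both the $\RR^3$-injectivity radius of $\gamma$ and the normal injectivity radius of $\gamma$ inside $M$, with $\partial V$ transverse to $M$. It is routine tubular neighborhood theory, but it is precisely where the smooth embedded M\"obius hypothesis enters; once this setup is in place the homological counting in the previous paragraph closes the argument.
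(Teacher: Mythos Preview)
Your proof is correct and shares its first step with the paper's: both observe that the push-in $C$ is homologous in $M\setminus\partial M$ to twice the core circle, so $\lambda(M)=2\,\operatorname{lk}(\gamma,\partial M)$.

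The difference is in showing $\operatorname{lk}(\gamma,\partial M)$ is odd. The paper does this in one stroke: since $\gamma$ is an orientation-reversing loop in $M$, the normal line bundle of $M$ in $\RR^3$ restricted to $\gamma$ is the M\"obius bundle, so a small push-off $\gamma'$ of $\gamma$ normal to $M$ meets $M$ transversely in a single point; hence the mod~$2$ linking number of $\gamma$ with $\partial M$ (computable as the mod~$2$ intersection of $\gamma'$ with the mod~$2$ Seifert surface $M$ itself) is $1$. Your route---restricting to a solid-torus neighborhood $V$ of $\gamma$, reading $\partial N$ as an $(a,b)$-curve on $\partial V$ with $|b|=2$ and $\gcd(a,b)=1$, and concluding $a$ is odd---reaches the same conclusion with more machinery (Seifert longitude, classification of simple closed curves on the torus, and the tubular-neighborhood setup you flag as the main technical point). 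The paper's argument is shorter and avoids the need to arrange $N=M\cap V$ nicely; your argument has the virtue of being entirely homological once the setup is in place, and makes the integer $\lambda(M)/2$ visible as the meridional winding of the boundary on the torus.
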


\begin{proof}
Let $S$ be an embedded, orientation-reversing path in the interior of $M$. 
Note that we can perturb $S$ slightly to get a curve $S'$ that intersects $M$ transversely and in exactly one point.  Thus the mod $2$ linking number of $\partial M$ and $S$ is $1$, and therefore the integer linking number
of $\partial M$ and $S$ (whichever way we orient those curves) is an odd integer.

Now push $\partial M$ into $M$ to get an embedded curve $C$ as in the definition 
of $\lambda(M)$.  Then $C$ is homologous in $M\setminus \partial M$ (and therefore 
in $\RR^3\setminus \partial M$) to $S$ traversed twice, so $\lambda(M)$, the linking number of $\partial M$ and $C$, is equal to the twice the linking number of 
 $\partial M$ and $S$.
 \end{proof}

\begin{definition}
A {\bf generalized M\"obius strip} in $\RR^3$ is a smoothly embedded, compact
(not necessarily connected) surface $M$  in $\RR^3$ such that
$\partial M$ has exactly one component, and such that $\lambda(M)$ is nonzero.
\end{definition}

Every generalized M\"obius strip is non-orientable, but not every non-orientable surface
in $\RR^3$ is a generalized M\"obius strip.   For example, if we attach a handle to a flat disk to make a non-orientable surface $M$ (topologically a Klien bottle with a disk removed), then $\lambda(M)=0$.

Note that if a generalized M\"obius strip in Euclidean space is a minimal surface, then it has no components without boundary and thus must be connected.

\begin{lemma}\label{bdry-forcing-lemma}
Suppose $\Gamma$ is a smooth, simple closed curve in $\RR^3$,
suppose $t\in [a,b]\mapsto M(t)$ is a Brakke flow with boundary $\Gamma$ such that $a$ and $b$ are regular times,
and suppose the flow has  no boundary singularities.  
Then $t\in [a,b]\mapsto \lambda(M(t))$ is constant.
In particular, $M(a)$ is a generalized Mobius strip if and only if $M(b)$ is a generalized Mobius strip.
\end{lemma}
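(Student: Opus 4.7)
The plan is to show that $t \mapsto \lambda(M(t))$ is locally constant on $[a,b]$; since it takes integer values on a connected interval, it will then be constant.

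First I would exploit the compactness of $[a,b]$ together with the assumption of no boundary singularities to produce a uniform boundary regularity scale. Concretely, for every $t\in[a,b]$ the flow is smooth in some spacetime neighborhood of $\Gamma\times\{t\}$, with $\spt M(t)$ a smooth compact manifold-with-boundary near $\Gamma$. A covering argument yields an open tubular neighborhood $U$ of $\Gamma$ in $\RR^3$ and an $\eps_0>0$ such that for all $t\in[a,b]$ the set $M(t)\cap U$ is a smooth embedded manifold-with-boundary, depending smoothly on $t$, and the level set
\[
    C_\eps(t):=\{p\in M(t)\cap U : \dist(p,\Gamma)=\eps\}
\]
is a smooth embedded curve in $U\setminus\Gamma$ for every $\eps\in(0,\eps_0]$.

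Next I would fix a Seifert surface $\Sigma$ in $\RR^3$ with oriented boundary $\Gamma$, chosen to meet $C_\eps(a)$ transversely, and use the definition of $\lambda$ in the excerpt:
\[
    \lambda(M(t)) \;=\; C_\eps(t)\cdot_{\RR^3\setminus\Gamma}\Sigma,
\]
the intersection number of $C_\eps(t)$ with $\Sigma$ in $\RR^3\setminus\Gamma$. Because the flow varies smoothly near $\Gamma$, for each $t_0\in[a,b]$ the curves $C_\eps(t)$ give a smooth isotopy in $U\setminus\Gamma\subset\RR^3\setminus\Gamma$ for $t$ close to $t_0$. After a small perturbation of $\Sigma$ (keeping $\partial\Sigma=\Gamma$) to make all these intersections transverse in a neighborhood of $t_0$, the signed count of intersection points is locally constant in $t$. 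Thus $\lambda(M(t))$ is locally constant at every $t_0\in[a,b]$, and hence constant on $[a,b]$.

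The main subtlety is that interior singularities of $M(t)$ could in principle interact with the Seifert surface $\Sigma$, but this is a non-issue: the quantity $\lambda(M(t))$ only involves the pushoff $C_\eps(t)$, which lives in the boundary neighborhood $U$ where the flow is smooth, and the intersection number with $\Sigma$ only depends on the homology class of $C_\eps(t)$ in $\RR^3\setminus\Gamma$. The only genuine technical point is obtaining the uniform regularity scale $\eps_0$ on the compact interval $[a,b]$, which follows from local boundary regularity for the standard Brakke flow (e.g.\ the boundary analogue of \cite{white-local}) applied at each spacetime point together with compactness. The last sentence of the lemma is then immediate from the proposition characterizing generalized M\"obius strips by $\lambda\ne0$.
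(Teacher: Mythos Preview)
Your proof is correct and follows essentially the same idea as the paper's one-line argument, which simply observes that $\lambda(M(t))$ depends only on the behavior of $M(t)$ in an arbitrarily small neighborhood of $\Gamma$; you have merely unpacked that observation into explicit detail (uniform boundary-regularity scale, isotopy of the push-off curves, homological invariance of the linking number).
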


\begin{proof} 
The lemma is true because $\lambda(M(t))$ only depends on the behavior of $M(t)$ in an arbitrarily small neighborhood of the boundary.
\end{proof}

\begin{theorem}\label{mobius-theorem}
Let $\kappa_{gm}$ be the infimum of the total curvature $tc(\Gamma)$ among smooth, simple
closed curves $\Gamma$ in $\RR^3$ that bound generalized minimal M\"obius strips.
Then
\begin{equation}
\kappa_{gm} < 4\pi.
\end{equation}
If $\kappa_{gm} < \kappa < 4\pi$, then there exists a smooth, simple closed curve $\Gamma$ in $\RR^3$ of total
curvature $<\kappa$ and an eternal standard Brakke flow $t\in \RR\mapsto M(t)$ with boundary $\Gamma$ such that
\begin{enumerate}[\upshape (1)]
\item\label{first-property} $\sup_te(M(t); \Gamma) < \dfrac{\tc(\Gamma)}{2\pi}$.
\item $M(t)$ converges smoothly as $t\to -\infty$ to a generalized M\"obius strip $M(-\infty)$.
\item The surface $M(-\infty)$ has the least area  of any minimal, generalized
    M\"obius strip bounded by $\Gamma$.
\item\label{last-property} $M(t)$ converges smoothly as $t\to\infty$ to a minimal surface $M(\infty)$ that is not a generalized M\"obius strip.
\end{enumerate}
Furthermore, such a flow must have a boundary singularity, and the shrinker $\Sigma$ corresponding to a tangent
flow at any such boundary singularity is a smoothly embedded, nonorientable surface with straight line boundary.
\end{theorem}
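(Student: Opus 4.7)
The plan is to apply Theorems~\ref{general-existence-theorem} and~\ref{general-redux} with the class $\Ff$ of generalized M\"obius strips. First I would verify the two structural conditions: $\Ff$ is isotopy-invariant, because both the conditions ``$\partial M$ has one component'' and ``$\lambda(M)\ne 0$'' are preserved under isotopy of embeddings (via the isotopy extension theorem, $\lambda$ is an ambient-isotopy invariant since it is defined as a linking number); and $\Ff$ does not contain disk-type surfaces, since a disk is orientable and orientable embedded surfaces have $\lambda=0$.

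The main obstacle is the strict bound $\kappa_{gm}<4\pi$, for which an explicit construction is required. I would exhibit a single smooth simple closed curve $\Gamma_0$ of total curvature strictly less than $4\pi$ bounding a generalized minimal M\"obius strip. A natural candidate is an embedded minimal M\"obius strip obtained from a Weierstrass representation (in the spirit of Meeks), whose boundary curve one can compute directly; alternatively, one can take a smooth simple closed curve lying in a thin tubular neighborhood of a line segment but closing up with a half-twist so that its total curvature stays well below $4\pi$, and then produce a minimal M\"obius strip spanning it by area-minimization among non-orientable spanning surfaces in that isotopy class (using standard existence for non-orientable Plateau problems, together with the low total curvature to rule out alternative topologies).

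Granting $\kappa_{gm}<4\pi$, for any $\kappa$ with $\kappa_{gm}<\kappa<4\pi$ the definition of the infimum produces a smooth simple closed curve $\Gamma_0$ with $\tc(\Gamma_0)<\kappa$ bounding a generalized minimal M\"obius strip. Theorem~\ref{general-redux} then produces a curve $\Gamma$ with $\tc(\Gamma)\le\tc(\Gamma_0)<\kappa<4\pi$ that satisfies the hypotheses of Theorem~\ref{general-existence-theorem}, which in turn supplies an eternal standard Brakke flow $M(\cdot)$ with boundary $\Gamma$ fulfilling the entropy bound (\ref{first-property}) as well as the smooth-limit statements (2)--(\ref{last-property}).

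To establish the final ``furthermore'' clause, I would argue by contradiction using Lemma~\ref{bdry-forcing-lemma}: if $M(\cdot)$ had no boundary singularity, then for arbitrarily large intervals $[a,b]$ with $a\ll 0\ll b$ (both regular, by Theorem~\ref{long-theorem}) the linking number $\lambda(M(t))$ would be constant on $[a,b]$, forcing $\lambda(M(-\infty))=\lambda(M(\infty))$ after taking limits. But $M(-\infty)\in\Ff$ yields $\lambda(M(-\infty))\ne 0$, while $M(\infty)\notin\Ff$ together with $\partial M(\infty)=\Gamma$ (a single closed curve) forces $\lambda(M(\infty))=0$, a contradiction. Given that a boundary singularity exists, Theorem~\ref{long-theorem}(\ref{shrinker-smooth}) produces a smoothly embedded multiplicity-one shrinker $\Sigma$ as the tangent flow; its boundary is the spatial blow-up of $\Gamma$ at the singular point, which is the tangent line to $\Gamma$ there, hence a $1$-dimensional linear subspace (a straight line through the origin); and Theorem~\ref{long-theorem}(\ref{shrinker-non-orientable}) finally yields the non-orientability of $\Sigma$.
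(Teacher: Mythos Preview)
Your argument is essentially the paper's: take $\Ff$ to be the class of generalized M\"obius strips, verify (as you correctly do) that it is isotopy-invariant and excludes disks, then apply Theorems~\ref{general-redux} and~\ref{general-existence-theorem} for properties (1)--(4), Lemma~\ref{bdry-forcing-lemma} for the existence of a boundary singularity, and Theorem~\ref{long-theorem} for the smoothness and non-orientability of the shrinker. The one place where the paper is tighter is the bound $\kappa_{gm}<4\pi$: the paper simply cites~\cite{EWW}*{\S5}, where an explicit smooth simple closed curve of total curvature below $4\pi$ bounding a minimal M\"obius strip is constructed; your two proposed routes gesture in that direction but are only sketches---in particular the area-minimization approach would still require a nondegeneration argument to guarantee that the minimizer remains a M\"obius strip rather than pinching to a lower-area orientable competitor.
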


\begin{proof}
By~\cite{EWW}*{\S5}, there is a smooth, simple closed curve of total curvature $<4\pi$ such that the curve bounds a minimal 
Mobius strip.  Thus $\kappa_{gm}<4\pi$.

By Theorem~\ref{general-redux}, there exists a curve $\Gamma$ and an eternal standard Brakke flow $M(\cdot)$ with boundary $\Gamma$
and having properties~\eqref{first-property}--\eqref{last-property}.  (One lets $\Ff$ be the family of all
generalized Mobius strips.)

By Lemma~\ref{bdry-forcing-lemma}, there must be a boundary singularity.   
By Theorem~\ref{long-theorem}, the tangent flow must have the indicated properties.
\end{proof}

\section{Boundary singularities are unavoidable}

\begin{theorem}\label{unavoidable-theorem}
Let $\Cc$ be the set of smoothly embedded surfaces $M$ in $\RR^3$ such that:
\begin{enumerate}[\upshape (i)]
\item the boundary curve $\Gamma$ has total curvature less than $4\pi$,
\item the entropy $e(M;\Gamma)$ is less than $2$,
and
\item any standard Brakke flow with boundary $\Gamma$ and with initial surface $M$ must develop a boundary singularity for which the corresponding shrinker is a smoothly embedded, multiplicity-one, nonorientable surface with straight line boundary.
\end{enumerate}
Then $\Cc$ has nonempty interior.
\end{theorem}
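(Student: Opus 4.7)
The plan is to construct a specific $M_*\in\Cc$ by time-shifting the eternal flow from Theorem~\ref{mobius-theorem}, and then to verify that the three defining conditions of $\Cc$ are all stable under small smooth perturbations of $M_*$.

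Apply Theorem~\ref{mobius-theorem} to obtain a smooth simple closed curve $\Gamma$ with $\tc(\Gamma)<4\pi$ and an eternal standard Brakke flow $t\mapsto M(t)$ with boundary $\Gamma$, whose past limit $M_0:=M(-\infty)$ is a least-area minimal generalized M\"obius strip bounded by $\Gamma$ and whose future limit $M(\infty)$ is not a generalized M\"obius strip. Using Theorem~\ref{long-theorem}\eqref{most-times}, pick $\epsilon>0$ so small that $M_*:=M(-\epsilon)$ is smoothly embedded. Condition~(i) is then built in, and (ii) follows from entropy monotonicity: $e(M_*;\Gamma)\le\sup_t e(M(t);\Gamma)<\tc(\Gamma)/(2\pi)<2$. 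Two properties will drive~(iii): first, $\lambda(M_*)=\lambda(M_0)\ne 0$, since $M_*$ and $M_0$ are connected by the regular part of the eternal flow, where Lemma~\ref{bdry-forcing-lemma} applies; second, $\area(M_*)<\area(M_0)$ strictly, because the eternal flow is nonconstant and cannot be stationary on any interval of regular times ending at $-\infty$ (else it would equal $M_0$ at all times and have no singularity, contradicting Theorem~\ref{mobius-theorem}).

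For~(iii) at $M_*$, suppose $\tilde M(\cdot)$ is any standard Brakke flow with $\tilde M(0)=M_*$, boundary $\Gamma$, and no boundary singularity. Theorem~\ref{long-theorem} says almost every time is regular and that $\tilde M(t)$ converges smoothly to an embedded minimal surface $\tilde M(\infty)$ as $t\to\infty$. Piecing Lemma~\ref{bdry-forcing-lemma} across consecutive regular times preserves the linking number, so $\lambda(\tilde M(\infty))=\lambda(M_*)\ne 0$; thus $\tilde M(\infty)$ is a minimal generalized M\"obius strip bounded by $\Gamma$. But $\area(\tilde M(\infty))\le\area(M_*)<\area(M_0)$, contradicting the least-area defining property of $M_0$. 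Hence every Brakke flow from $M_*$ develops a boundary singularity, and the shrinker properties demanded in~(iii) follow from Theorem~\ref{long-theorem}\eqref{shrinker-smooth} and~\eqref{shrinker-non-orientable}, together with the fact that the shrinker's boundary is the tangent line to $\Gamma$ at the singular point.

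Finally, for openness of $\Cc$ at $M_*$, let $M'$ lie in a small $C^2$-neighborhood of $M_*$ with boundary $\Gamma':=\partial M'$. The quantities $\tc(\Gamma')$, $e(M';\Gamma')$, and $\area(M')$ all depend continuously on $M'$, so (i) and (ii) persist. The linking number $\lambda(M')=\lambda(M_*)\ne 0$ by isotopy invariance. Let $A(\Gamma')$ denote the infimum of area among minimal generalized M\"obius strips bounded by $\Gamma'$, with $A(\Gamma')=+\infty$ when no such surface exists; Theorem~\ref{minimal-compact} makes $\Gamma'\mapsto A(\Gamma')$ continuous at $\Gamma$ (upper semicontinuity via perturbation plus Plateau, lower semicontinuity via compactness of the minimizers). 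Since $\area(M_*)<A(\Gamma)$ strictly, the inequality $\area(M')<A(\Gamma')$ persists for $M'$ close enough to $M_*$, and the previous paragraph's argument applies verbatim to $M'$. The main technical obstacle I anticipate is controlling the linking number of $\tilde M(\cdot)$ across interior singular times; this is handled by Theorem~\ref{long-theorem}\eqref{most-times}, which supplies a dense set of regular times across which Lemma~\ref{bdry-forcing-lemma} can be patched together.
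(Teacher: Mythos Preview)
Your approach is essentially the paper's: pick a regular time slice $M_*$ of the eternal flow from Theorem~\ref{mobius-theorem} that is a generalized M\"obius strip with area strictly below $\area M(-\infty)$, and show that any standard Brakke flow started nearby must converge to a minimal surface of area below the least-area generalized M\"obius strip bounded by the (nearby) curve, hence not a generalized M\"obius strip, forcing a boundary singularity via Lemma~\ref{bdry-forcing-lemma}.  The paper packages the openness step as a direct subsequence argument (pass $M_i(\infty)$ to a limit $M'$ bounded by $\Gamma$ and compare areas), which is exactly the lower semicontinuity of your $A(\Gamma')$.

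Two small points to tighten.  First, invoking Theorem~\ref{long-theorem}\eqref{most-times} only gives you a regular \emph{time} $-\epsilon$; it does not guarantee that $(-\infty,-\epsilon]$ is free of boundary singularities, which you need both for $\lambda(M_*)=\lambda(M_0)$ and for the strict area inequality.  You should instead choose $-\epsilon$ before the first singular time, as the paper does (such times exist since the flow is smooth for $|t|$ large by Theorem~\ref{long-theorem}).  Second, you only use lower semicontinuity of $A(\cdot)$ at $\Gamma$, and that is precisely what Theorem~\ref{minimal-compact} delivers; your ``upper semicontinuity via perturbation plus Plateau'' is neither needed nor justified, since the perturbed competitor is not minimal and Plateau does not respect the generalized-M\"obius-strip constraint.
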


\begin{proof}
Let $\Gamma$ and 
\[
  t\in \RR \mapsto M(t)
\]
be as in Theorem~\ref{mobius-theorem}.   Let $T$ be a regular time at which $M(T)$ is a smoothly embedded generalized M\"obius strip.
(For example, $T$ could be any time before the first singularity.)
    We will show that $M(T)$ lies in the interior of $\Cc$.

By time translation, we can assume that $T=0$.
Let $M_i$ be a sequence of surfaces that converge smoothly to $M(0)$.  Let $\Gamma_i=\partial M_i$.
Let $t\in [0,\infty)\mapsto M_i(t)$ be a standard Brakke flow with boundary $\Gamma_i$ and with initial 
surface $M_i(0)=M_i$.

Note that
\begin{equation}\label{entropy<2}
   \sup_t e(M_i(t); \Gamma_i) = (e(M_i(0); \Gamma_i) \to e(M(0);\Gamma)< 2.
\end{equation}

Consequently, $M_i(t)$ converges smoothly as $t\to\infty$ to an embedded minimal surface $M_i(\infty)$.
By passing to a subsequence, we can assume (see Theorem~\ref{minimal-compact}) that $M_i(\infty)$ converges smoothly to a minimal surface $M'$ bounded by $\Gamma$.    Now
\[
  \area(M') = \lim_i \area(M_i(\infty)) \le \lim_i \area M_i(0) = \area M(0) < \area(M(-\infty)).
\]
Since $M(-\infty)$ achieves the least area of minimal generalized M\"obius strips bounded by $\Gamma$, 
it follows that $M'$ is not a generalized M\"obius strip, and hence neither is $M_i(\infty)$ for $i$ large.  
  For each such $i$, $M_i(t)$ is not a generalized M\"obius strip for all sufficiently large $t$.

Thus for all sufficiently large $i$:
\begin{align}
\label{initial-item} &\text{$M_i(0)$ is a generalized M\"obius strip, and} \\
\label{large-time-item} &\text{$M_i(t)$ is not a generalized M\"obius strip for all sufficiently large $t$.}
\end{align}

By~\eqref{initial-item}, ~\eqref{large-time-item}, and Lemma~\ref{bdry-forcing-lemma}, $M_i(\cdot)$ must have a boundary singularity.  
By~\eqref{entropy<2} and Theorem~\ref{long-theorem},
 the corresponding shrinker must be smoothly embedded and nonorientable, and must have multiplicity one.
\end{proof}

\section{The Three Pi Theorem}

\begin{theorem}\label{3-pi-theorem}
Suppose that $\Gamma'$ is a smoothly embedded, simple closed curve in $\RR^3$ 
and that $\Gamma'$ bounds a smooth minimal surface that is not a disk.  
Then $\tc(\Gamma')>3\pi$.
\end{theorem}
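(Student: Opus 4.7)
The plan is to assume for contradiction that $\tc(\Gamma')\le 3\pi$ and $\Gamma'$ bounds a smooth minimal non-disk, and then to construct via Theorem~\ref{general-existence-theorem} an eternal flow that cannot actually exist.  Let $\Ff$ be the family of smoothly embedded compact surfaces-with-boundary in $\RR^3$ that are not diffeomorphic to a disk.  Then $\Ff$ is closed under isotopy, contains no disks, and by assumption $\Gamma'$ bounds a minimal surface in $\Ff$.  Theorem~\ref{general-redux} produces a smooth simple closed curve $\Gamma$ with $\tc(\Gamma)\le\tc(\Gamma')\le 3\pi$ satisfying the hypotheses of Theorem~\ref{general-existence-theorem}; applying the latter yields an eternal standard Brakke flow $t\in\RR\mapsto M(t)$ with boundary $\Gamma$ whose entropy satisfies
\[
\sup_t e(M(t);\Gamma)<\frac{\tc(\Gamma)}{2\pi}\le\frac{3}{2},
\]
with $M(-\infty)$ a non-disk minimal surface of least area in $\Ff$ (necessarily connected, since $\RR^3$ admits no closed minimal surfaces) and $M(\infty)$ a minimal surface not in $\Ff$, hence a disk.

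The crucial step is a key lemma: any smoothly embedded non-orientable shrinker $\Sigma$ in $\RR^3$ whose boundary is a straight line $L$ satisfies $e(\Sigma;L)\ge 3/2$.  I would prove this by studying the asymptotic cone $C$ of $\Sigma$ at infinity, which is a stationary cone in $\RR^3$ with boundary $L$ and therefore a weighted union of half-planes through $L$.  If the density of $C$ at the origin were strictly less than $1$, then $C$ would consist of a single multiplicity-one half-plane, so $\Sigma$ would be asymptotic to a smooth graph over that half-plane; transporting the asymptotic orientation through the connected smooth $\Sigma$ would then orient $\Sigma$ globally, contradicting non-orientability.  Thus $\operatorname{density}(C)\ge 1$.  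A direct Gaussian computation, with $v\in\RR^3\setminus L$ and $\lambda\to\infty$, yields $\int_\Sigma\psi_{v,\lambda}\to\operatorname{density}(C)\ge 1$ and $\int_{E_{L,v}}\psi_{v,\lambda}\to 1/2$, so $e(\Sigma;L)\ge 3/2$.

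Using the key lemma I would rule out every possible singularity of $M(\cdot)$, forcing it to be smooth for all time.  At an interior singularity Theorem~\ref{long-theorem}(1) gives a smooth multiplicity-one shrinker, orientable as a closed embedded surface in $\RR^3$; by the low-entropy classification of Colding--Ilmanen--Minicozzi--White and Bernstein--Wang, the only such shrinkers with entropy $<\sqrt{2\pi/e}$ are planes (regular) and round spheres, and the cylinder shrinker (entropy $\sqrt{2\pi/e}>3/2$) is excluded.  At a boundary singularity Theorem~\ref{long-theorem}(3) together with Theorem~\ref{bernstein-theorem} gives a smooth multiplicity-one non-orientable shrinker with straight-line boundary, whose entropy is $\ge 3/2$ by the key lemma; this contradicts the flow's entropy bound, so no boundary singularity occurs.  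Since neither cylinder-type nor boundary singularities occur and $M(-\infty)$ is a connected surface with a single boundary component, no $S^2$-component can ever form, and hence sphere tangent flows are also impossible.  Thus $M(\cdot)$ is smooth at all times, its topology is constant, and it cannot pass from the non-disk $M(-\infty)$ to the disk $M(\infty)$---contradiction.  The main obstacle is the key lemma: the delicate point is establishing $\operatorname{density}(C)\ge 1$ for the blow-down via non-orientability together with the Bernstein theorem of Section~2, and then rigorously passing to the $\lambda\to\infty$ limit of $(\Sigma+E_{L,v})\psi_{v,\lambda}$.
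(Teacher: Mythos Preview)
Your overall strategy matches the paper's: reduce via Theorem~\ref{general-redux} (with $\Ff$ the non-disk surfaces) to an eternal flow whose $M(-\infty)$ is not a disk and whose $M(\infty)$ is a disk, and then show that any singularity forces entropy at least $3/2$, contradicting the bound $e<\tc(\Gamma)/(2\pi)$.  Your handling of interior singularities is essentially the paper's: the paper observes directly that $M(t)$ is connected with nonempty boundary before the first singular time, so the shrinker is noncompact and Bernstein--Wang gives entropy $\ge\sigma_1>3/2$; you instead classify low-entropy shrinkers and rule out spheres by connectedness, which amounts to the same thing.

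The genuine gap is in your key lemma.  The step ``transporting the asymptotic orientation through the connected smooth $\Sigma$ would then orient $\Sigma$ globally'' is simply false: connectedness does not let you extend a local orientation through a non-orientable surface---that is precisely what non-orientability means.  Topologically, one can take a half-plane with boundary $L$, excise a disk in a compact region, and glue in a M\"obius band to obtain an embedded non-orientable surface with boundary $L$ that agrees with the half-plane outside a ball; so a single half-plane asymptotic cone does not by itself force orientability.  You would need a separate analytic argument that a \emph{shrinker} with line boundary and half-plane asymptotic cone is the half-plane, but Theorem~\ref{bernstein-theorem} assumes orientability as a hypothesis, so invoking it here would be circular.

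The paper's proof of this lemma (Theorem~\ref{doubling}) is quite different and avoids both this issue and the $\lambda\to\infty$ limit you flag as delicate.  Rotate $\Sigma$ by $\pi$ about $L$ to get a smoothly \emph{immersed} shrinker $M:=\Sigma\cup\rho(\Sigma)$ without boundary.  Since $M$ is non-orientable it has a self-intersection point, hence entropy $>2$ (strict, since equality would make $M$ a cone and hence planar).  Thus $\int_\Sigma\psi_{0,1}=\tfrac12\,e(M)>1$, giving
\[
e(\Sigma;L)\ \ge\ \tfrac12+\int_\Sigma\psi_{0,1}\ >\ \tfrac32.
\]
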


\begin{proof}
We may assume that $\tc(\Gamma')< 4\pi$.

By Theorem~\ref{general-redux} (applied to the family~$\Ff$ of non-disk surfaces), there is a smooth, simple closed curve $\Gamma$ 
with
\[
   \tc(\Gamma) \le \tc(\Gamma')
\]
and a
there is a standard Brakke flow $t\in\RR\mapsto M(t)$ with boundary $\Gamma$ 
for which $M(-\infty)$ is not a disk and $M(\infty)$ is a disk.

Thus the flow must have at least one singularity.  Consider the shrinker $\Sigma$ corresponding
to a tangent flow at the first singular time $T$.

If the singularity is a boundary singularity, $e(M(\cdot); \Gamma) > \frac32$ by Theorem~\ref{doubling} below.

Now suppose the singularity is an interior singularity.  Since $M(t)$ is connected and with nonempty 
boundary for $t<T$, we see that $\Sigma$ is noncompact.  
By~\cite{bernstein-wang-top}*{Corollary~1.2}, the entropy of $\Sigma$ is greater than or equal to the entropy 
\[
    \sigma_1 = (2\pi/e)^{1/2} \cong 1.52
\]
of a round cylinder.

Thus in either case (boundary singularity or interior singularity), 
\[
  e(M(T);\Gamma) \ge \frac32.
\]
On the other hand,
\[
   e(M(T);\Gamma)\le \frac{\tc(\Gamma)}{2\pi}.
\]
Thus
\[
     \frac{tc(\Gamma)}{2\pi} > \frac32,
\]
so
\[
    \tc(\Gamma')\ge   \tc(\Gamma) > 3\pi.
\]
\end{proof}

\begin{theorem}\label{doubling}
Let $\Sigma\subset \RR^{n+1}$ be a smooth, non-orientable shrinker whose boundary is an $(n-1)$-dimensional
linear subspace $L$.  Then 
\[
   e(\Sigma; L) > \frac32.
\]
\end{theorem}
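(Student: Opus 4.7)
The plan is to double $\Sigma$ across $L$ by rotation, analyze the resulting boundaryless self-shrinker $\Sigma^*$, and then combine with the half-plane contribution from the exterior cone $E_{L,v}$ at points $v$ approaching $L$. Let $r\colon\RR^{n+1}\to\RR^{n+1}$ be the $\pi$-rotation about $L$, and set $\Sigma^*:=\Sigma+r(\Sigma)$ as a sum of varifolds. At each $p\in L$ the tangent half-plane $H_p$ of $\Sigma$ and its rotate $r(H_p)$ fit together into a full $n$-plane, so $\Sigma^*$ glues smoothly across $L$; the two boundary copies of $L$ cancel, and since $\Sigma$ and $r(\Sigma)$ individually satisfy the shrinker equation, $\Sigma^*$ is a smooth self-shrinker in $\RR^{n+1}$ without boundary.

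Next I argue that $\Sigma\cap r(\Sigma)\supsetneq L$. Because $\Sigma$ is non-orientable, it contains a loop $\gamma$ along which a locally defined unit normal to $\Sigma$ in $\RR^{n+1}$ flips sign; the same loop sits inside $\Sigma^*$ with the same monodromy, so $\Sigma^*$ has non-trivial normal bundle. But a properly embedded, boundaryless smooth hypersurface in $\RR^{n+1}$ — which $\Sigma^*$ would be by its self-similar asymptotics, were $\Sigma\cap r(\Sigma)=L$ — separates its complement and therefore carries a global unit normal, and so must be orientable. The contradiction forces a point $p\in(\Sigma\cap r(\Sigma))\setminus L$, and at any such $p$ the varifold $\Sigma^*$ has density at least $2$.

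Consequently $\sup_{v,\lambda}\Sigma^*\psi_{v,\lambda}\ge 2$, and the Colding--Minicozzi identity (entropy equals Gaussian area for self-shrinkers, via Huisken's monotonicity) yields $F(\Sigma^*)\ge 2$. Since $r$ is an isometry and $\psi_{0,1}$ is $r$-invariant, $F(\Sigma^*)=F(\Sigma)+F(r(\Sigma))=2F(\Sigma)$, so $F(\Sigma)\ge 1$. In fact $F(\Sigma)>1$ strictly: equality $F(\Sigma^*)=2$ would, by the rigidity of low-entropy self-shrinkers in the spirit of Bernstein--Wang, force $\Sigma^*$ to be a multiplicity-two hyperplane, which is incompatible with the non-orientable loop in $\Sigma^*$.

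To finish, take $v=\epsilon\,e$ where $e$ is a unit vector perpendicular to $L$, and $\lambda=1$. The set $E_{L,v}$ is the half-$n$-plane through $L$ opposite to $v$, and a direct Gaussian integration gives $E_{L,v}\psi_{v,1}=\tfrac12\operatorname{erfc}(\epsilon/2)\to\tfrac12$ as $\epsilon\to 0^+$, while $\Sigma\psi_{v,1}\to F(\Sigma)$ by continuity in $v$. Therefore
\[
e(\Sigma;L)\ \ge\ \lim_{\epsilon\to 0^+}(\Sigma+E_{L,v})\psi_{v,1}\ =\ F(\Sigma)+\tfrac12\ >\ \tfrac32.
\]
The hardest ingredients are the topological step (non-orientability of $\Sigma$ forcing $\Sigma^*$ to self-intersect off $L$, using proper embedding of self-shrinkers) and the rigidity step ruling out the borderline case $F(\Sigma^*)=2$.
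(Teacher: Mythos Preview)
Your argument follows the same doubling strategy as the paper's: reflect across $L$, use non-orientability to force a self-intersection of the double $\Sigma^*$ (since a properly embedded hypersurface in $\RR^{n+1}$ would be two-sided), deduce entropy $\ge 2$ for $\Sigma^*$, halve, and add the $\tfrac12$ half-plane contribution from $E_{L,v}$.  The one place to tighten is the strict inequality $F(\Sigma^*)>2$: the Bernstein--Wang results you invoke concern shrinkers with entropy \emph{below} the cylinder threshold and say nothing at entropy $2$.  Instead use the equality case of Huisken monotonicity, as the paper does: if the entropy of $\Sigma^*$ equals $2$ and the Gaussian density at the self-intersection point $p$ is also $2$, then $\Sigma^*$ must be a cone centered at $p$, hence---being smoothly immersed there---a union of hyperplanes, contradicting non-orientability.
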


\begin{proof}
Rotate $\Sigma$ by $\pi$ about $L$ to get $\Sigma'$.  Then $M:=\Sigma\cup \Sigma'$ is a smoothly immersed
shrinker without boundary.   Since it is non-orientable, it must have a point $p$ of self-intersection.  Thus  the entropy
of $M$ is $\ge 2$.  In fact, the entropy must be $>2$, since otherwise $M$ would be a cone centered at $p$.
Since $M$ is smooth, that means $M$ would be planar, which is impossible since it is nonorientable.

Thus
\[
  \int_\Sigma \frac1{(4\pi)^{1/2}} e^{-|x|^2/4}\,dx  
  = \frac12\int_M \frac1{(4\pi)^{1/2}} e^{-|x|^2/4}\,dx
  = \frac12 e(M) > 1,
\]
(where $e(M):=e(M; \emptyset)$ is the entropy of $M$),
so
\[
   e(\Sigma; L) \ge \frac12 +  \int_\Sigma\frac1{(4\pi)^{1/2}} e^{-|x|^2/4}\,dx > \frac32.
\]
\end{proof}

\section{Oriented Surfaces}

We can  improve Theorem~\ref{3-pi-theorem} slightly in the case of oriented surfaces.

\begin{theorem}\label{sharper}
Suppose that $\Gamma'$ is a smoothly embedded, simple closed curve in $\RR^3$ 
and that $\Gamma'$ bounds a smooth, oriented minimal surface that is not a disk.  
Then
\[
   \tc(\Gamma') > 2\pi \sigma_1,
\]
where $\sigma_1=(2\pi/e)^{1/2}$ is the entropy of $\Ss^1\times\RR$.
\end{theorem}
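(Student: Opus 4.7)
The plan is to mimic the proof of Theorem~\ref{3-pi-theorem}, with the key improvement that orientability lets us \emph{eliminate} the boundary-singularity case entirely via Theorem~\ref{bernstein-theorem}, rather than merely bounding its entropy below by $3/2$. What then remains is the interior-singularity case, for which Bernstein--Wang gives the strictly larger threshold $\sigma_1=(2\pi/e)^{1/2}$.

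Since $2\pi\sigma_1<4\pi$, I may assume $\tc(\Gamma')<4\pi$. Let $\Ff$ be the family of compact, smooth, orientable surfaces-with-boundary in $\RR^3$ that are not disks; this family is isotopy-invariant and contains no disks, and by hypothesis $\Gamma'$ bounds a minimal surface in $\Ff$. Theorem~\ref{general-redux} then yields a smooth simple closed curve $\Gamma$ with $\tc(\Gamma)\le\tc(\Gamma')$ satisfying the hypotheses of Theorem~\ref{general-existence-theorem}. Applying the latter produces an eternal standard Brakke flow $t\in\RR\mapsto M(t)$ with boundary $\Gamma$ such that $M(-\infty)$ is an orientable non-disk minimal surface, $M(\infty)\notin\Ff$, and
\[
  \sup_{t\in\RR} e(M(t);\Gamma) < \frac{\tc(\Gamma)}{2\pi}.
\]
Since $M(-\infty)$ and $M(\infty)$ are not isotopic, the flow is non-constant; let $T$ be its first singular time.

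The crux is to show that $T$ is an interior singular time. For $t<T$ the flow is smooth and its time-slices are smoothly isotopic to $M(-\infty)$, hence orientable. Suppose toward contradiction that $T$ is a boundary singular time. By Theorem~\ref{long-theorem}(\ref{shrinker-smooth}), the tangent flow is given by a smooth, embedded, multiplicity-one shrinker $\Sigma$ bounded by a line $L$; since $\Sigma$ arises as a smooth multiplicity-one limit of parabolic rescalings of the oriented time-slices $M(t)$, $t<T$, it inherits an orientation. Theorem~\ref{bernstein-theorem} then forces $\Sigma$ to be a half-plane, contradicting singularity at that spacetime point. So $T$ is an interior singular time, and its tangent flow is a smooth, multiplicity-one, embedded, non-planar shrinker $\Sigma'\subset\RR^3$ without boundary. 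By~\cite{bernstein-wang-top}*{Corollary~1.2}, the entropy of $\Sigma'$ is at least $\sigma_1$; by Huisken monotonicity together with the strict bound from Theorem~\ref{general-existence-theorem}(\ref{entropy-item}),
\[
  \sigma_1 \le e(\Sigma') \le e(M(T);\Gamma) < \frac{\tc(\Gamma)}{2\pi},
\]
so $\tc(\Gamma')\ge\tc(\Gamma) > 2\pi\sigma_1$.

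The main obstacle is justifying that the boundary shrinker $\Sigma$ inherits an orientation from the approximating rescaled surfaces. This is exactly where the entropy-less-than-two hypothesis earns its keep: Theorem~\ref{long-theorem}(\ref{shrinker-smooth}) provides smooth, multiplicity-one convergence, which realizes the rescaled oriented time-slices locally as graphs over $\Sigma$ with consistent unit normals, producing a well-defined global orientation on $\Sigma$ to which Theorem~\ref{bernstein-theorem} applies. Once this reduction to interior singularities is in hand, the rest is bookkeeping with the Bernstein--Wang bound and the strict entropy estimate already built into Theorem~\ref{general-existence-theorem}.
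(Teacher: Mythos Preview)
Your proof is correct and follows the paper's approach almost verbatim: rule out boundary singularities at the first singular time via orientability of the shrinker (the paper cites Theorem~\ref{long-theorem}(\ref{shrinker-non-orientable}) for this, which is just Theorem~\ref{bernstein-theorem} repackaged), then apply Bernstein--Wang at the resulting interior singularity. The one step you omit is that before invoking~\cite{bernstein-wang-top}*{Corollary~1.2} you must note---as the paper does by referring back to the proof of Theorem~\ref{3-pi-theorem}---that $\Sigma'$ is \emph{noncompact} (because $M(t)$ is connected with nonempty boundary for $t<T$); without this, the round sphere, whose entropy $4/e$ is strictly less than $\sigma_1$, would not be excluded.
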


This is a slight improvement over the $3\pi$ in Theorem~\ref{3-pi-theorem} because
\[
  \frac{2\pi \sigma_1}{3\pi} =  1.01356\dots.
\]

It is conjectured that Theorem~\ref{sharper} holds with $4\pi$ in place of $2\pi\sigma_1$.
  The constant $4\pi$ would be sharp since (by work of Almgren-Thurston~\cite{AT} or 
  the simplified version by Hubbard~\cite{hubbard}), for every $\eps>0$ and $g$, there is a smooth
  simple closed curve of total curvature $<4\pi + \eps$ that bounds no embedded minimal surface of genus $\le g$.
  Such a curve bounds immersed minimal surfaces (the Douglas solutions) of each genus $\le g$, and an embedded minimal surface (the least area integral current) of genus $>g$.  See the discussion in the introduction
  of~\cite{EWW}.

\begin{proof}[Proof of Theorem~\ref{sharper}]
We may suppose that $\tc(\Gamma')< 4\pi$.

Let $\Ff$ be the family of connected, oriented surfaces of genus $\ge 1$.
By Theorem~\ref{general-redux}, there is smooth simple closed curve $\Gamma$ with
\[
    \tc(\Gamma) \le \tc(\Gamma')
\]
and a standard Brakke flow
\[
  t\in \RR \mapsto M(t)
\]
with boundary $\Gamma$ such that $M(\infty)$ is a smooth disk and such that $M(-\infty)$ is smooth, orientable
minimal surface that is not a disk.

Thus the flow must have a singularity.  
Consider the shrinker $\Sigma$ corresponding to a singularity at the first singular time $T$.
 Since $M(t)$ is orientable for $t<T$, $\Sigma$ must be orientable. 
 Thus the singularity is an interior singularity (by Theorem~\ref{long-theorem}.)

As in the proof of Theorem~\ref{3-pi-theorem}, the entropy of $\Sigma$ is greater than or equal to the
 entropy $\sigma_1$ of a cylinder,
and thus
\[
  \sigma_1 \le e(M(T); \Gamma) < \frac{tc(\Gamma)}{2\pi}.
\]
\end{proof}

\appendix

\section{Regularity and Compactness}\label{regularity-appendix}

\begin{lemma}\label{plane-lemma}
Suppose that $\Sigma$ is a $2$-dimensional integral varifold in $\RR^n$, that
\begin{equation}\label{shrinking-flow}
   t\in (-\infty,0) \mapsto |t|^{1/2} \Sigma
\end{equation}
is a standard Brakke flow without boundary, and that $\Sigma$ has entropy $<2$.
Then $\Sigma$ is a smoothly embedded surface.
\end{lemma}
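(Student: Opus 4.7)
My plan is to show that at every point $p \in \Sigma$, every spatial tangent cone $T_p\Sigma$ is a multiplicity-one plane; once this is done, the shrinker equation gives $\Sigma$ bounded mean curvature in the Euclidean metric, so Allard's regularity theorem upgrades $\Sigma$ to a $C^{1,\alpha}$ graph near $p$, and elliptic regularity for the shrinker PDE then yields smoothness. Embeddedness is automatic afterward: any self-intersection would force the Gaussian density there to be $\ge 2$, contradicting the entropy bound $e(\Sigma;\emptyset) < 2$.

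The first step is Huisken's monotonicity applied to the standard Brakke flow $t \mapsto |t|^{1/2}\Sigma$: the Gaussian density at every spacetime point is at most the entropy of the flow, hence $< 2$. The second step identifies the spacetime tangent flow at any $(p,-1)$ with $p \in \Sigma$ as the static flow $t \mapsto T_p\Sigma$. Indeed, the parabolic rescaling
\[
 M_\lambda(t) = \lambda^{-1}\bigl((1-\lambda^2 t)^{1/2}\Sigma - p\bigr)
\]
converges as $\lambda \to 0$ to the spatial tangent cone $T_p\Sigma$, uniformly on compact $t$-intervals, because the $t$-dependence enters only through the $O(\lambda^2)$ expansion of $(1-\lambda^2 t)^{1/2}$.

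The heart of the argument is then to analyze $T_p\Sigma$. By closure of the class of standard Brakke flows under tangent-flow limits, the static flow $t \mapsto T_p\Sigma$ is standard; in particular it is free of triple-junction singularities. Passing to the link $T_p\Sigma\cap S^{n-1}$, one obtains a stationary integral $1$-varifold in the sphere, i.e., a geodesic network. Stationarity at a vertex forces the unit outward tangent vectors (weighted by multiplicity) to sum to zero; at a valence-two vertex this forces the two arcs to meet with opposite tangents and equal multiplicities, hence smoothly, so in fact there is no vertex at all. Excluding triple-or-higher junctions therefore leaves a smooth disjoint union (with multiplicities) of closed geodesics, i.e., great circles of length $2\pi$. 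Consequently $\Theta(T_p\Sigma,0)$ equals $(2\pi)^{-1}$ times the total length of the link, hence a positive integer, and the bound $\Theta(T_p\Sigma,0) \le e(\Sigma;\emptyset) < 2$ forces it to equal $1$. Thus $T_p\Sigma$ is a multiplicity-one plane, as desired.

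The main obstacle will be the link analysis --- translating the spacetime ``no triple junction'' axiom into a statement about the link's geodesic-network structure, and then deducing integrality of the density from the absence of singular vertices. Everything else (monotonicity, Allard regularity, the elliptic bootstrap, and embeddedness) is routine once the tangent cone has been pinned down as a plane.
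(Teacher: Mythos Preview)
Your proposal is correct and follows essentially the same route as the paper: identify the tangent flow at $(p,-1)$ with the static cone $T_p\Sigma$, analyze its link as a stationary geodesic network in $S^{n-1}$, use standardness to exclude triple junctions and the entropy bound to force a single multiplicity-one great circle, then invoke Allard regularity. One small elision worth tightening: standardness by itself only rules out the odd-valence (in particular triple) junctions, while junctions of valence $\ge 4$ (and arcs of multiplicity $\ge 2$) are excluded by the density bound $\Theta<2$ you already established---you should cite that bound explicitly at the step ``excluding triple-or-higher junctions.''
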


\begin{proof}
First we prove:
\begin{equation}\label{cone-flat}
\text{If $\Sigma$ is a cone, then it is a multiplicity-one plane.}
\end{equation}
If $\Sigma$ is a cone, it must be a stationary cone, so its intersection with the unit sphere
is a geodesic network.  Since the entropy is $<2$, each geodesic arc occurs with multiplicity $1$.  Also, at each
vertex of the network, $3$ or fewer arcs meet.  But $3$ arcs cannot meet at a point because the flow is standard and therefore has no triple junctions.

Thus there are no vertices.  That is, $\Sigma\cap \partial \BB$ consists of disjoint, multiplicity-one geodesics.  Because the entropy is $<2$, there can only be one such geodesic.  Thus $\Sigma$ is a multiplicity-one plane. This proves~\eqref{cone-flat}.

In the general case, note that $\Sigma$ is a stationary integral varifold for the shrinker metric on $\RR^n$.  Let $C$ be a tangent 
cone to $\Sigma$ at a point $p$.  Then
\[
    t\in (-\infty,0) \mapsto C = |t|^{1/2}C
\]
is a tangent flow to the flow~\eqref{shrinking-flow} at the spacetime point $(p,-1)$.  By~\eqref{cone-flat}, $C$ is a multiplicity-one plane.  Hence (by Allard regularity) $p$ is a regular point of $\Sigma$.
\end{proof}

\begin{lemma}\label{half-plane-lemma}
Suppose that $\Sigma$ is a $2$-dimensional integral varifold in $\RR^n$, 
that
\begin{equation}\label{shrinking-bdry-flow}
  t\in (-\infty,0)\mapsto |t|^{1/2} \Sigma
\end{equation}
is a standard Brakke flow with boundary $L$, where $L$ is a straight line through the origin,
and that $e(\Sigma; L) <2$ (see Definition~\ref{entropy-def}).
Then $\Sigma$ is a smoothly embedded manifold with boundary $L$.
\end{lemma}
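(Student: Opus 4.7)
The proof should parallel that of Lemma~\ref{plane-lemma}, treating interior and boundary points separately. At a point $p \in \Sigma \setminus L$, Lemma~\ref{plane-lemma} applies in a neighborhood of $p$ disjoint from $L$: the restricted flow is a standard Brakke flow without boundary of entropy at most $e(\Sigma;L) < 2$, so every tangent cone at $p$ is a multiplicity-one plane and $p$ is a smooth interior point by Allard's interior regularity theorem.

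At a boundary point $p \in L$, the plan is to show that every tangent cone $C$ to $\Sigma$ at $p$ is a multiplicity-one half-plane bounded by $T_p L$, from which Allard's boundary regularity theorem yields that $\Sigma$ is a smooth manifold-with-boundary near $p$. By standard blow-up, $C$ is a $2$-dimensional stationary integral cone in $\RR^n$ whose mod-$2$ boundary is the line $T_p L$, and the rescaled flow $t \mapsto |t|^{1/2} C$ is a standard Brakke flow inheriting the bound $e(C; T_p L) \le e(\Sigma;L) < 2$. I would analyze $C$ via its link $\gamma := C \cap \SS^{n-1}$, a $1$-dimensional stationary geodesic network on the sphere with mod-$2$ boundary $\{N, S\} := T_p L \cap \SS^{n-1}$. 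The entropy bound forces multiplicity one on each arc of $\gamma$. At an interior vertex of degree $k$, the local cone structure has density $k/2$; the entropy bound gives $k \le 3$, the standardness hypothesis rules out $k = 3$ (triple junction), and force balance shows $k = 2$ is not a real vertex. Hence $\gamma$ has no interior vertices, so every arc is either a closed great circle or a geodesic arc with both endpoints in $\{N, S\}$.

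The main obstacle is the final classification step. The mod-$2$ boundary constraint forces an odd number of meridian arcs, and the entropy bound $\mathrm{length}(\gamma) < 4\pi$ restricts total length, leaving three candidate configurations: a single meridian (half-plane); three meridians (three half-planes meeting along $T_p L$); or one meridian together with a closed great circle (a half-plane plus a disjoint full plane through the origin). The two pathological configurations should be excluded by the standard boundary structure: three half-planes along $T_p L$ is a boundary-version triple junction, while a full plane whose interior is crossed by the boundary line $T_p L$ is incompatible with taking the boundary as a mod-$2$ chain in the standard sense. A cleaner alternative is to apply Theorem~\ref{bernstein2} directly to $C$: the singular set of $C$ is contained in the one-dimensional union of the origin and the rays over the vertices of $\gamma$, so $\Hh^{n-1}(\sing C) = 0$, and each component of $\reg C$ is a planar region and therefore orientable; Theorem~\ref{bernstein2} then forces $C$ to be a single multiplicity-one half-plane. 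Either way, Allard's boundary regularity theorem completes the proof.
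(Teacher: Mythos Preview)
Your overall architecture matches the paper's exactly: handle interior points via Lemma~\ref{plane-lemma}, and at $p\in L$ show that every tangent cone is a multiplicity-one half-plane, then invoke Allard boundary regularity. The link analysis (multiplicity one on arcs, no interior vertices by the entropy bound plus standardness) is also the same.

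The gap is in the final counting step. You translate $e(C;T_pL)<2$ into $\mathrm{length}(\gamma)<4\pi$, but this drops the boundary contribution built into Definition~\ref{entropy-def}: the entropy is computed against $C+E_{v,L}$, and when $v$ lies on $L$ this adds $\tfrac12$ (cf.\ the $\tfrac12 1_\Gamma$ term in~\eqref{cone-quantity}). With $j$ meridians and $k$ great circles the correct inequality is
\[
\frac{j}{2}+k+\frac12 \;<\; 2,\qquad\text{i.e.}\quad j+2k<3,
\]
which together with $j$ odd forces $j=1$, $k=0$ immediately. The paper's proof is exactly this computation; no further argument is needed.

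Your proposed workarounds for the two ``bad'' configurations do not actually succeed. Three half-planes meeting along the \emph{boundary} line $L$ have mod~$2$ boundary equal to $L$ (multiplicity $3\equiv 1$), so standardness does not exclude this; the ``no triple junctions'' consequence of standardness refers to \emph{interior} junctions. Likewise a half-plane together with a full plane through the origin has mod~$2$ boundary $L$, and the plane meets $L$ only at the origin, so there is no incompatibility with the mod~$2$ boundary condition. Finally, your appeal to Theorem~\ref{bernstein2} is illegitimate here: that theorem is stated for hypersurfaces in $\RR^{n+1}$, whereas Lemma~\ref{half-plane-lemma} concerns $2$-dimensional varifolds in $\RR^n$ for arbitrary $n$, so the codimension is wrong unless $n=3$.
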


\begin{proof}
First we prove:
\begin{equation}\label{halfcone-flat}
\text{If $\Sigma$ is a cone, then it is a multiplicity-one half-plane.}
\end{equation}
Note that if $\Sigma$ is a cone, 
then $\Sigma\cap \partial \BB$ is a geodesic network.  Each geodesic arc occurs with multiplicity $1$.
Exactly as in the proof of Lemma~\ref{plane-lemma}, there can be no vertices of the network,
 except the two points of $L\cap \partial \BB$.

Thus $\Sigma\cap \partial \BB$ consists of
   $j$ geodesic semicircles with endpoints $L\cap\partial \BB$, 
together with
$k$ geodesic circles
(for some integers $j$ and $k$).  Consequently $\Sigma$ consists of $j$ halfplanes (each with boundary $L$) together with $k$ planes.    The extended entropy of $\Sigma$ is 
$(j/2) + k + 1/2$,
so 
\begin{equation}\label{fractions}
  \frac{j+1}2 + k < 2
\end{equation}
 and therefore $j<3$.  By standardness, the mod $2$ boundary of $\Sigma$ is $L$, so $j$ is odd.
Thus $j=1$.  By~\eqref{fractions}, $k=0$.  This proves~\eqref{halfcone-flat}.

Now we consider the general case.   Exactly as in Lemma~\ref{plane-lemma}, $\Sigma$ is smooth and embedded except perhaps along $L$.   Let $C$ be a tangent cone to $\Sigma$ at a point $p\in L$.   Then
\[
   t\in(-\infty,0) \mapsto C = |t|^{1/2}C
\]
is a tangent flow to the flow~\eqref{shrinking-bdry-flow}
 at the spacetime point $(p,-1)$, so $C$ is a multiplicity-one half-plane by~\eqref{halfcone-flat}. Thus $p$
is a regular point of $\Sigma$ by Allard regularity.
\end{proof}

\begin{corollary}\label{half-plane-corollary}
Let $\Sigma$ be as in Lemma~\ref{half-plane-lemma}.  If $\Sigma$ is invariant under translations in some direction,
 then $\Sigma$ is a half-plane.
\end{corollary}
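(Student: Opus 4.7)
The plan is to use the translation invariance to reduce $\Sigma$ to a product $\gamma\times L$ and then invoke ODE uniqueness to show the profile curve $\gamma$ must be a straight ray from the origin. To begin, let $v\neq 0$ satisfy $\Sigma+tv=\Sigma$ for all $t$. Because translation commutes with taking boundary, $L+tv=L$ for every $t$, which forces $v$ to be parallel to $L$. Taking $v$ to be a unit vector spanning $L$, translation invariance together with the smoothness of $\Sigma$ from Lemma~\ref{half-plane-lemma} gives a product decomposition $\Sigma=\gamma\times L$, where $\gamma:=\Sigma\cap L^{\perp}$ is a smooth embedded $1$-manifold in $L^{\perp}$ whose only boundary point is the origin $L\cap L^{\perp}$.

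Next I reduce the shrinker equation $\vec{H}+x^{\perp}/2=0$ for $\Sigma$ to an ODE for $\gamma$. Since $L$ is tangent to $\Sigma$ at every point, the equation descends to the curve-shrinker equation for $\gamma$ in $L^{\perp}$. Parametrizing the component of $\gamma$ through the origin by arc length $s$ with $\gamma(0)=0$ and $\gamma'(0)=\tau$, this becomes
\[
  \gamma''(s)\;=\;-\tfrac{1}{2}\gamma(s)+\tfrac{1}{2}(\gamma(s)\cdot\gamma'(s))\,\gamma'(s).
\]
The main observation is that the ray $\gamma(s)=s\tau$ is a solution of this smooth ODE with these initial data: $\gamma''=0$, while the right-hand side evaluates to $-s\tau/2+(s/2)\tau=0$. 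By standard uniqueness for a Lipschitz second-order ODE, $\gamma(s)=s\tau$ throughout its maximal arc-length interval; since $\gamma$ has only one boundary point, that interval must extend to $[0,\infty)$. Hence the component of $\gamma$ through the origin is a complete ray, so the corresponding component of $\Sigma$ is a half-plane bounded by $L$.

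The one remaining obstacle is to rule out extra closed components of $\gamma$, which would give closed cylinder components of $\Sigma$. For this I apply a bivector argument: $\omega(s):=\gamma(s)\wedge\gamma'(s)$ satisfies $\omega'(s)=\gamma\wedge\gamma''=\tfrac{1}{2}(\gamma\cdot\gamma')\omega(s)$, a linear ODE. On a closed component $\omega\not\equiv 0$ (a closed curve cannot lie on a line through the origin), so $\omega$ stays in a fixed $1$-dimensional subspace of $\Lambda^{2}L^{\perp}$; consequently the $2$-plane spanned by $\gamma$ and $\gamma'$ is constant, and the closed component lies in a $2$-plane through the origin. Abresch--Langer then identifies it as the round circle of radius $\sqrt{2}$ centered at the origin, and the associated cylinder contributes Gaussian density $\sigma_{1}=\sqrt{2\pi/e}$ at the origin. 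Combined with the $1/2$ contribution of the half-plane, this would force $e(\Sigma;L)\geq \tfrac{1}{2}+\sigma_{1}>2$, contradicting the hypothesis. Hence no such extra component exists and $\Sigma$ is exactly a half-plane.
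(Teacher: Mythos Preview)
Your argument is correct, and in fact more thorough than the paper's.  The paper's proof is two lines: since the translation direction must be $L$, both $\Sigma$ and its tangent half-plane $P$ at the origin are minimal for the shrinker metric $g$ and are tangent to each other along their common boundary $L$; unique continuation (Cauchy--Kovalevskaya for the real-analytic metric $g$) then gives $\Sigma=P$.  Your ODE-uniqueness step is exactly the one-dimensional incarnation of that unique continuation, so the core idea is the same, just made explicit.  What your version buys is an honest treatment of possible extra components of $\gamma$, which the paper's terse ``thus $P$ and $\Sigma$ coincide'' passes over in silence; what the paper's version buys is brevity, and it avoids importing the Abresch--Langer classification.

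One small gap: you rule out only \emph{closed} extra components of $\gamma$, but a priori there could be complete noncompact ones as well.  Your bivector argument applies verbatim to those, forcing any such component into a $2$-plane through the origin; there the only embedded noncompact curve-shrinker is a line through $0$, which would either destroy the manifold-with-boundary structure of $\gamma$ at the origin or (as a plane component of $\Sigma$) push $e(\Sigma;L)\ge \tfrac12+1+\tfrac12=2$.  So the case is easily dispatched, but it should be mentioned.  Also, your entropy lower bound is actually $1+\sigma_1$ rather than $\tfrac12+\sigma_1$ once the boundary correction is included (compare the count $(j/2)+k+\tfrac12$ in the proof of Lemma~\ref{half-plane-lemma}); either way the bound exceeds $2$.
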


\begin{proof}
The direction of translational invariance would have to be $L$.  Let $P$ be the tangent half-plane to $\Sigma$ at $0$.
Then $P$ and $\Sigma$ are $g$-minimal surfaces that are tangent along $L$ (where $g$ is the shrinker metric.)
 Thus $P$ and $\Sigma$ coincide.
\end{proof}

\begin{proposition}\label{most-times-proposition}
Suppose that $\Gamma$ is a smoothly embedded curve in $\RR^n$ and suppose that
\[
 t\in I \mapsto M(t)
\]
is a standard Brakke flow with boundary $\Gamma$ satisfying the entropy bound
\[
   e(M(t);\Gamma) < 2 \quad\text{for all $t$}.
\]
Then the set of singular times has measure $0$.
\end{proposition}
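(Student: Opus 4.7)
The plan is to combine the tangent-flow classification established in Lemmas~\ref{plane-lemma} and~\ref{half-plane-lemma} with a parabolic stratification of the singular set $\Sing\subset\RR^n\times I$.

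First, I would note that by those two lemmas, at every spacetime point of $M(\cdot)$—interior or boundary—each tangent flow is the shrinking flow associated to a smooth, embedded, multiplicity-one $2$-dimensional shrinker $\Sigma$, with boundary either empty (interior case) or a straight line $L$ through the origin (boundary case). Applying Brakke's local regularity theorem in the interior case, and its boundary analogue from~\cite{white-mcf-boundary}, the spacetime point is a regular point of the flow if and only if every such tangent shrinker $\Sigma$ is a plane, respectively a half-plane with boundary $L$.

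Next, I would verify that a smooth, multiplicity-one $2$-dimensional shrinker invariant under translations in a $2$-dimensional subspace must be a plane (interior case) or a half-plane (boundary case). The boundary case follows directly from Corollary~\ref{half-plane-corollary}, with one of the two independent translation directions necessarily along $L$. For the interior case, a $2$-parameter translation group foliates $\RR^n$ by affine $2$-planes, so translation-invariance forces $\Sigma$ to be a union of these orbit planes; smoothness, embeddedness, multiplicity one, and entropy $<2$ then force $\Sigma$ to be a single plane. Consequently, at every singular spacetime point of $M(\cdot)$, no tangent flow can admit a $2$-parameter family of translational symmetries.

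With this in hand, the standard parabolic stratification of the singular set applies: the interior version is due to Almgren, Ilmanen, and White, and its extension to boundary spacetime points rests on the monotonicity formula and tangent-flow compactness developed in~\cite{white-mcf-boundary}. This gives $\dim_{\mathrm{par}}\Sing\le 1$. Because time carries parabolic weight $2$, the projection of $\Sing$ onto the time axis has ordinary Hausdorff dimension at most $1/2$, and in particular Lebesgue measure zero, which is the claim. The main obstacle is confirming the applicability of the stratification in the presence of boundary spacetime points; this relies on the boundary monotonicity and tangent-flow machinery of~\cite{white-mcf-boundary}, combined with the smoothness of tangent shrinkers already supplied by Lemma~\ref{half-plane-lemma}.
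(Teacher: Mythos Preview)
Your argument is correct and follows the same route as the paper: smoothness of tangent shrinkers from Lemmas~\ref{plane-lemma} and~\ref{half-plane-lemma}, a symmetry restriction at genuine singularities, and then White's parabolic stratification~\cite{White-Stratification}. The paper phrases the symmetry step slightly differently---``one direction of translational invariance forces an interior cylinder, via Corollary~\ref{half-plane-corollary}''---rather than your ``two spatial directions of invariance force a plane or half-plane,'' but these are equivalent inputs to the stratification.

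One small step is left implicit in your write-up (and in the paper's): to conclude $\dim_{\mathrm{par}}\Sing\le 1$ you must also exclude \emph{static} tangent flows, since a conical shrinker already has parabolic spine dimension $\ge 2$ from time-translation invariance alone, regardless of spatial symmetry. This is immediate here because a smooth $2$-dimensional cone is a plane (respectively half-plane), and hence corresponds to a regular point; but without this observation the passage from ``no $2$-parameter family of spatial translational symmetries'' to ``$\dim_{\mathrm{par}}\Sing\le 1$'' is incomplete.
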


\begin{proof}
Let $\Sigma$ be the shrinker for the tangent flow at a singularity.  Then $\Sigma$ is smoothly embedded,
so if it had a direction of translational invariance, then by Corollary~\ref{half-plane-corollary}, the singularity
would not be at a boundary point.  Thus $\Sigma$ would be a cylinder.
Regularity at almost all times follows from the standard stratification theory~\cite{White-Stratification}.
\end{proof}

\begin{remark}
From the proof, we see that we do not really need to assume that the surfaces have entropy $<2$; it is enough
to assume that all the tangent flows have entropy $<2$.
\end{remark}

\begin{theorem}\label{minimal-compact}
Suppose that $\Gamma$ is a smooth, simple closed curve in $\RR^n$ of total curvature $\le 4\pi$ and that $\Gamma_i$
converges smoothly to $\Gamma$.  Suppose that $M_i$ is a smooth embedded minimal surface bounded by $\Gamma_i$.
Then, after passing to a subsequence, $M_i$ converges smoothly to a minimal surface bounded by $\Gamma$.
\end{theorem}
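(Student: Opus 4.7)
My plan is to view each minimal surface $M_i$ as a time-independent standard Brakke flow with boundary $\Gamma_i$, extract a subsequential limit using Brakke-flow compactness, and use the interior/boundary shrinker classifications of Lemmas~\ref{plane-lemma} and~\ref{half-plane-lemma} together with Allard-type regularity to upgrade varifold convergence to smooth convergence. First I would dispose of the equality case $\tc(\Gamma)=4\pi$: by the equality clause of Theorem~\ref{vision-tc-theorem}, $\Gamma$ is then a convex planar curve, and the only minimal surface bounded by such a curve is the flat disk it spans (by the convex hull principle), so the conclusion is immediate. Hence I may assume $\tc(\Gamma)<4\pi$; by smooth convergence $\tc(\Gamma_i)<4\pi$ eventually, and by Theorem~\ref{vision-tc-theorem} together with Corollary~\ref{entropy-vision-corollary} applied to the ancient constant flow $t\mapsto M_i$, I obtain the uniform bound $e(M_i;\Gamma_i)\le\vis(\Gamma_i)<2$.

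The entropy bound yields uniform density-ratio bounds via monotonicity, and the convex hull property confines each $M_i$ to a fixed compact neighborhood of $\Gamma$, giving a uniform area bound. Brakke-flow compactness for flows with boundary (\cite{white-mcf-boundary}) then gives a subsequence along which $t\mapsto M_i$ converges to a standard Brakke flow $t\mapsto M(t)$ with boundary $\Gamma$. Since each $M_i$ is $t$-independent, so is $M(t)\equiv M$, and $M$ is a stationary integral varifold with boundary $\Gamma$ and $e(M;\Gamma)<2$. At each $p\in\spt M$, a tangent cone $C$ is a stationary minimal cone of entropy $<2$, coinciding with its own self-similar flow $t\mapsto|t|^{1/2}C=C$. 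If $p$ is an interior point, Lemma~\ref{plane-lemma} shows $C$ is a multiplicity-one plane; if $p\in\Gamma$, Lemma~\ref{half-plane-lemma} shows $C$ is a multiplicity-one half-plane bounded by $\Tan(\Gamma,p)$. Allard's interior and boundary regularity theorems then imply $p$ is a regular point, so $M$ is a smoothly embedded, multiplicity-one minimal surface with boundary $\Gamma$.

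Smooth convergence $M_i\to M$ now follows by a standard graphical bootstrap: at each interior point, Allard's theorem provides $C^{1,\alpha}$ graph representations of the $M_i$ over $M$, and elliptic regularity for the minimal surface equation upgrades this to smooth convergence; near $\Gamma$, the corresponding boundary Allard theorem gives smooth convergence up to $\Gamma$. The main obstacle is the boundary regularity step, where one must combine the $+\tfrac12$ boundary contribution in the definition of $e(M;\Gamma)$ with the shrinker classification of Lemma~\ref{half-plane-lemma} to identify all boundary tangent cones as half-planes of multiplicity one; once this is done, the bootstrap to smooth convergence up to $\Gamma$ is routine.
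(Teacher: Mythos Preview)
Your approach is essentially the paper's: treat each $M_i$ as a constant standard Brakke flow with boundary, pass to a limit flow, and use the shrinker classifications (Lemmas~\ref{plane-lemma} and~\ref{half-plane-lemma}) together with Allard regularity to conclude smoothness of the limit and then smooth convergence. The paper packages the regularity step via Proposition~\ref{most-times-proposition} (``smooth at almost all times'') rather than arguing point by point on tangent cones, but the content is the same.

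There is one logical slip. Your treatment of the case $\tc(\Gamma)=4\pi$ is incorrect: the equality clause of Theorem~\ref{vision-tc-theorem} asserts that $\vis(\Gamma)=\tc(\Gamma)/(2\pi)$ if and only if $\Gamma$ is a convex planar curve; it does \emph{not} say that $\tc(\Gamma)=4\pi$ forces $\Gamma$ to be convex planar. In fact the opposite holds: a simple closed convex planar curve has total curvature exactly $2\pi$, so if $\tc(\Gamma)=4\pi$ then $\Gamma$ is \emph{not} convex planar, and hence by the equality clause $\vis(\Gamma)<\tc(\Gamma)/(2\pi)=2$ strictly. Thus no separate treatment of the boundary case is needed at all: $\vis(\Gamma)<2$ whenever $\tc(\Gamma)\le 4\pi$, and Corollary~\ref{entropy-vision-corollary} applied to the constant limit flow gives $e(M;\Gamma)\le\vis(\Gamma)<2$, which is exactly what you need to feed into the shrinker lemmas (or into Proposition~\ref{most-times-proposition}, as the paper does). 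With this correction your argument goes through and matches the paper's.
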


\begin{proof}
This is not hard to prove by minimal surface techiques (using extended monotonicity~\cite{EWW}*{Theorem~9.1} and
arguments analogous to the proofs of Lemmas~\ref{plane-lemma} and~\ref{half-plane-lemma}).   Here we prove it 
using mean curvature flow since we have already established all the necessary ingredients.

By the convex hull property of minimal surfaces and the isoperimetric inequality, 
the $M_i$ lie in a compact subset of $\RR^n$ and their areas areas are uniformly bounded.
Thus, after passing to a subsequence, the $M_i$ converge weakly to a compactly supported integral varifold $M$.
Note that the standard Brakke flows
\[
   t\in\RR \mapsto M_i
\]
with boundary $\Gamma_i$
converge to the Brakke flow
\begin{equation}\label{M-flow}
  t\in \RR \mapsto M
\end{equation}
with boundary $\Gamma$.
Thus $t\in \RR\mapsto M$ is also standard~\cite{white-mcf-boundary}*{Theorem~13.1}.
By Proposition~\ref{most-times-proposition}, the flow~\eqref{M-flow} is smooth at almost all times. 
Thus $M$ is smooth.  By Allard Regularity, the convergence is smooth.
\end{proof}

\section{Reducing Total Curvature}\label{deformation-appendix}

\begin{theorem}\label{first-deformation}
Let $\alpha< 4\pi$.  Let $C(\alpha)$ be the collection of simple closed, polygonal curves in $\RR^n$ with total curvature at most $\alpha$.  Then $C(\alpha)$ is connected.
\end{theorem}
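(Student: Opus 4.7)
The plan is to prove $C(\alpha)$ is path-connected by induction on the number $k$ of vertices of a polygon $P \in C(\alpha)$. The base case $k=3$ is immediate: every triangle has total curvature exactly $2\pi < \alpha$, so all triangles lie in $C(\alpha)$, and the space of triangles in $\RR^n$ is path-connected via linear vertex interpolation (with a small generic perturbation to avoid collinear configurations along the way).

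For the inductive step with $k \ge 4$, I would construct a continuous path in $C(\alpha)$ from $P$ to a polygon with $k-1$ vertices via a \emph{vertex contraction}. The construction: identify an \emph{ear vertex} $v_i$ of $P$, meaning one whose adjacent triangle $\Delta_i = v_{i-1}v_iv_{i+1}$ meets the rest of $P$ only at the endpoints $v_{i-1}, v_{i+1}$, and continuously slide $v_i$ inside $\Delta_i$ to a point on the segment $\overline{v_{i-1}v_{i+1}}$. At this endpoint, $v_i$ lies on the straight segment joining its neighbors, so it effectively disappears, yielding a polygon with $k-1$ vertices. Simplicity is preserved because $v_i(s) \in \Delta_i$ throughout and $\Delta_i$ is disjoint from the rest of $P$.

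Two points require care. First, existence of an ear vertex: in $\RR^2$ this is the classical two-ears theorem; in $\RR^n$ for $n \ge 3$, a transversality/general-position argument (after an arbitrarily small perturbation of $P$ within $C(\alpha)$) produces one. Second, total curvature control: only the angles at $v_{i-1}, v_i, v_{i+1}$ change during the contraction, so it suffices to bound $\Theta(s) := \theta_{i-1}(s) + \theta_i(s) + \theta_{i+1}(s)$. At the endpoint one has $\Theta(1) \le \Theta(0)$ by the spherical triangle inequality applied to the sphere image of consecutive tangent directions (the new tangent $(v_{i+1}-v_{i-1})/|v_{i+1}-v_{i-1}|$ lies on the short geodesic arc joining the two old tangents), so the endpoint polygon lies in $C(\alpha)$. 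To control $\Theta$ at intermediate times, where it need not be monotone along a naive straight-line contraction, I would first perturb $P$ to arrange $\tc(P) < \alpha$ strictly and then follow the negative-gradient flow of $\Theta$ restricted to $\Delta_i$, giving monotone decrease of $\Theta$ (and hence of $\tc(P_s)$) along the deformation.

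The main obstacle is verifying that the negative-gradient flow of $\Theta$ actually terminates on the segment $\overline{v_{i-1}v_{i+1}}$, rather than at an interior critical point of $\Theta|_{\Delta_i}$. A critical-point analysis should settle this: since $\theta_i$ strictly decreases to zero as $v_i$ approaches the segment while $\theta_{i \pm 1}$ vary only modestly, the local minima of $\Theta$ on $\overline{\Delta_i}$ lie on the segment, ensuring the flow exits $\Delta_i$ along the segment and completes the vertex removal. Once the inductive step is established, iterating reduces any $P \in C(\alpha)$ to a triangle, which by the base case is connected to any chosen reference triangle.
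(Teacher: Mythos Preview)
Your approach has a genuine gap at the ``ear vertex'' step in $\RR^3$. In $\RR^3$ a $2$-dimensional triangle and a $1$-dimensional arc intersect generically in finitely many points (since $2+1=3$), so a general-position perturbation does \emph{not} make any given $\Delta_i$ disjoint from the rest of $P$; your transversality claim is simply false in the critical dimension. Worse, the existence of an ear at \emph{some} vertex is a global topological condition, not a genericity condition: for a knotted hexagon every $\Delta_i$ must be pierced (else an ear removal would isotope it to a $5$-gon, which is unknotted). So any correct argument must invoke the hypothesis $\alpha<4\pi$ here, and you never do. Even granting unknottedness via F\'ary--Milnor, it is not at all obvious (and you have not argued) that an unknotted polygon in $\RR^3$ with $\tc<4\pi$ necessarily has an ear. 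Your second gap compounds this: the gradient-flow argument for controlling $\Theta(s)$ during the contraction is a conjecture (``a critical-point analysis should settle this''), not a proof; you would need to rule out interior critical points of $\Theta|_{\Delta_i}$ and show the flow exits through the correct side, and the heuristic you give does not do this.

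The paper's proof avoids both difficulties by using the hypothesis $\alpha<4\pi$ directly, via Milnor's theorem, to produce a linear height function $h$ on $\Gamma$ with exactly one local maximum and one local minimum. One then sets $\Gamma(t)=\big(\Gamma\cap\{h\ge t\}\big)\cup\overline{\gamma_1(t)\gamma_2(t)}$, where $\gamma_1(t),\gamma_2(t)$ are the two points of $\Gamma$ at height $t$. Simplicity is automatic because the added chord lies in the level hyperplane $\{h=t\}$, which meets $\Gamma\cap\{h\ge t\}$ only at its endpoints; and $\tc(\Gamma(t))$ is nonincreasing because replacing an arc by its chord never increases total curvature (Milnor). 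For $t$ near the top, $\Gamma(t)$ is a triangle. Thus the single global input (bridge number one) replaces both your ear-existence step and your intermediate-curvature control in one stroke.
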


\begin{proof}
It suffices to show that any curve in $C(\alpha)$ can be deformed through curves in $C(\alpha)$
to a triangle.

Let $\Gamma$ be a curve in $C(\alpha)$.
According to Milnor's Theorem~\cite{milnor},  we can assume (by rotating and scaling) that the image of height function 
\[
  h: x\in \Gamma \mapsto x\cdot \ee_n
\]
is $[0,1]$, and that for each $y\in (0,1)$, there are exactly two points $\gamma_1(y)$ and $\gamma_2(y)$ of $\Gamma$ at which
$h=y$.

We may also assume that $h=1$ at exactly $1$ point.  

Let $\Gamma(0)=\Gamma$, and for $t\in (0,1)$, let $\Gamma(t)$ be
$\Gamma\cap \{h\ge t\}$ together with the segment joining $\gamma_1(t)$ and $\gamma_2(t)$.

The total curvature of $\Gamma(t)$ is a decreasing function of $t$, so $\Gamma(t)\in C(\alpha)$
for all $t\in [0,1)$.  Note that for $t$ close to $1$, $\Gamma(t)$ is a triangle.
\end{proof}

\begin{lemma}\label{second-deformation}
Let $\gamma:[0,1]\to \RR^n$ be a smooth simple closed curve.
Then there exists a one-parameter family 
\[
  t\mapsto \gamma_t
\]
such that 
\begin{enumerate}
\item $\gamma_0=\gamma$.
\item $\gamma_1$ is polygonal.
\item Each $\gamma_t$ is a piecewise-smooth simple closed curve.
\item The total curvature of $\gamma_t$ is a decreasing function of $t$.
\item For each $t\in [0,1]$ and each $s\in [0,1]$, 
\[
   \gamma'_t(s_-)\cdot \gamma'_t(s_+) > 0.
\]
\end{enumerate}
\end{lemma}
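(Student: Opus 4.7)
The plan is to deform $\gamma$ into an inscribed polygon one arc at a time. Arc-length parametrize $\gamma$ with domain $[0,L]$, fix a large integer $N$, and set $s_i=iL/N$, $p_i=\gamma(s_i)$, $\Delta s=L/N$, with unit tangent $T(s)=\gamma'(s)$. Let $P$ be the closed polygon with consecutive vertices $p_0,\dots,p_N=p_0$; by Milnor's theorem, $\tc(P)\le \tc(\gamma)$. I would build $\gamma_t$ in $N$ sequential stages: at $t=(i-1)/N$, $\gamma_t$ consists of the chords $\overline{p_0p_1},\dots,\overline{p_{i-2}p_{i-1}}$ followed by the smooth arc $\gamma|_{[s_{i-1},L]}$. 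During stage $i$, with $\tau=Nt-(i-1)\in[0,1]$, a growing chord from $p_{i-1}$ to the moving point $\gamma(s_{i-1}+\tau\Delta s)$ replaces the corresponding initial piece of the remaining smooth arc, so $\gamma_t$ contains that chord followed by $\gamma|_{[s_{i-1}+\tau\Delta s,\,L]}$. At $t=1$ the curve is $P$.

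Conditions (1), (2), (3) are immediate from the construction. For (5), a Taylor expansion of $\gamma$ near each $p_i$ shows that every corner angle of $\gamma_t$ is of order $O(1/N)$, so for $N$ large the left and right tangent vectors at any corner point in nearly the same direction, and in particular have positive inner product. Simplicity of $\gamma_t$ follows similarly, since each chord lies within $O(1/N^2)$ of the corresponding arc of $\gamma$, and $\gamma$ itself is a smooth embedding.

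The essential content is the monotonicity in (4). Within stage $i$, only three contributions to $\tc(\gamma_t)$ vary with $\tau$: the corner angle $\phi(\sigma)$ at $p_{i-1}$ (between the fixed incoming direction $u$ and the growing chord direction $v(\sigma):=(\gamma(\sigma)-p_{i-1})/|\gamma(\sigma)-p_{i-1}|$), the corner angle $\psi(\sigma):=\angle(v(\sigma),T(\sigma))$ at the moving chord head, and the total curvature of the shrinking smooth arc $\gamma|_{[\sigma,s_i]}$, where $\sigma=s_{i-1}+\tau\Delta s$. Differentiating yields $\tfrac{d}{d\tau}\tc(\gamma_t)=\Delta s\bigl(\phi'(\sigma)+\psi'(\sigma)-\kappa(\sigma)\bigr)$, and the monotonicity reduces to the pointwise inequality $\phi'(\sigma)+\psi'(\sigma)\le\kappa(\sigma)$.

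The main obstacle is establishing this differential inequality. The key analytic input is a local Milnor-type bound $\angle(T_A,v)+\angle(v,T_B)\le\tc(\alpha)$ for a smooth arc $\alpha$ from $A$ to $B$ with chord direction $v$ and endpoint tangents $T_A$, $T_B$; this follows from Fenchel's theorem applied to the piecewise-smooth closed curve $\alpha\cup\overline{BA}$, which is embedded once $N$ is large. Converting this global comparison into the desired differential inequality is a careful spherical-geometry computation tracking the motion of $v(\sigma)$ and $T(\sigma)$ on the unit sphere, exploiting that the fixed direction $u$ approximates $T(s_{i-1})$ with error $O(1/N)$ and is therefore negligible against the leading terms. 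A direct check in the circular-arc case gives exact equality $\phi'+\psi'=\kappa$ (so $\tc(\gamma_t)$ is constant throughout each stage), which both confirms the estimate is sharp and warns that a crude spherical triangle inequality will not suffice---an honest rate-of-change analysis on $S^{n-1}$ is needed.
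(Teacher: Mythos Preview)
Your sequential construction is a viable variant of the paper's, but you have made the monotonicity in (4) much harder than necessary and left it incomplete. You reduce it to the pointwise inequality $\phi'(\sigma)+\psi'(\sigma)\le\kappa(\sigma)$ and then concede that this is ``the main obstacle'', requiring a ``careful spherical-geometry computation'' that you do not carry out. As written, that is a gap.

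It is also an unnecessary one. You already invoke Milnor for $\tc(P)\le\tc(\gamma)$; the same idea gives (4) directly. Within stage $i$, if $t<t'$ correspond to chord endpoints $\gamma(\sigma)$ and $\gamma(\sigma')$ with $\sigma<\sigma'$, then $\gamma_{t'}$ is obtained from $\gamma_t$ by replacing a single sub-arc --- the chord from $p_{i-1}$ to $\gamma(\sigma)$ followed by the smooth piece $\gamma|_{[\sigma,\sigma']}$ --- by its chord $\overline{p_{i-1}\,\gamma(\sigma')}$. Milnor's result (replacing an arc of a closed curve by its chord does not increase total curvature, which follows from his refinement lemma and the inscribed-polygon characterization of $\tc$) then gives $\tc(\gamma_{t'})\le\tc(\gamma_t)$ with no differentiation needed. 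This is exactly how the paper handles (4): it simply cites Milnor.

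The paper's construction differs from yours only in that it replaces all $N$ arcs simultaneously rather than sequentially: its $\gamma_t$ is obtained from $\gamma$ by replacing each $\gamma|_{[k/N,(k+t)/N]}$ by its chord, for all $k$ at once. Monotonicity then follows by $N$ applications of the same chord-replacement fact. Either construction works; your handling of (3) and (5) is fine.
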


\begin{proof}
Let $\gamma:[0,1]\to \Gamma$ be a smooth parametrization.

Fix a large integer $N$, and for each $t\in [0,1]$, let $\gamma_t:[0,1]\to \RR^n$ be
 the closed curve formed from $\gamma$ by replacing (for each $k=0,\dots, N-1$)
\[
    \gamma | [k/N, (k+t)/N]
\]
by the line segment with the same endpoints.

Then $\gamma_0=\gamma$, the total curvature of $\gamma_t$ is a decreasing function of $t$
(by~\cite{milnor}),
and $\gamma_1$ is polygonal.

Note also that if $N$ is sufficiently large, then each $\gamma_t$ will be an embedding.
\end{proof}

\begin{theorem}\label{deformation-theorem}
Let $\Gamma$ be a smooth, simple closed curve of total curvature $\le \alpha<4\pi$.
Then $\Gamma$ can be deformed among such curves to a planar convex curve.
\end{theorem}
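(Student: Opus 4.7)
\emph{Proof proposal.} The plan is a three-stage deformation followed by a global smoothing. First, I would apply Lemma~\ref{second-deformation} to obtain a piecewise-smooth family $\{\gamma_t\}_{t\in[0,1]}$ from $\Gamma$ to a polygon $P=\gamma_1$, with nonincreasing total curvature and with each corner exterior angle strictly less than $\pi/2$. Second, since $P\in C(\tc(\Gamma))$, Theorem~\ref{first-deformation} provides a polygonal path from $P$ to a triangle $T$ through $C(\tc(\Gamma))$; $T$ is automatically planar and convex, with total curvature $2\pi$. Third, I would deform $T$ within its plane through planar convex curves of total curvature $2\pi$ to an inscribed round circle (for example, by a convex combination of $T$ with a round circle inscribed in the plane of $T$). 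Every curve in the concatenation has total curvature $\le\tc(\Gamma)\le\alpha$, so the concatenation yields the desired deformation apart from regularity.

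To upgrade this piecewise-smooth concatenation to a family of smooth embedded curves, I would use a corner-rounding operator $R_\epsilon$. Given a piecewise-smooth embedded curve $\eta$ whose corner exterior angles are bounded away from $\pi$, and a sufficiently small $\epsilon>0$, one replaces a neighborhood of each corner by a $C^\infty$ arc of diameter $\le\epsilon$ whose unit tangent rotates monotonically from the incoming tangent direction to the outgoing one. Such an arc can be constructed so that its total curvature equals the exterior angle of the corner it replaces (since the total curvature of an immersed arc equals the net rotation of its tangent), so $R_\epsilon$ preserves $\tc$. Choosing $\epsilon(t)>0$ as a continuous function that vanishes at parameters where the underlying curve is already smooth, the smoothed family $t\mapsto R_{\epsilon(t)}(\gamma_t)$ is a continuous family of smooth embedded curves in $\{\tc\le\alpha\}$ from $\Gamma$ to a round circle.

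The main obstacle is the smoothing step, specifically the continuous dependence of $R_{\epsilon(t)}$ on $t$. Corners are created at $t=0^+$ of Stage~1 (where $\epsilon(t)$ must vanish), and at the Stage~1--Stage~2 transition new corners may appear and old ones may merge; in Stage~2 the polygon's corner structure changes each time a vertex is ``shaved off''. The rounding radius $\epsilon(t)$ must be chosen small enough relative to (i) the minimum distance between non-adjacent portions of $\gamma_t$, so that the smoothed curve remains embedded, and (ii) the minimum side length, so that neighborhoods of distinct corners do not overlap. Both quantities are uniformly positive on each compact stage by continuity, so a suitable $\epsilon(t)$ exists; the bookkeeping at transitions between stages is the most delicate part but can be arranged by reparametrization and by shrinking $\epsilon(t)$ further at the transition times.
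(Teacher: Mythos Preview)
Your two-stage skeleton---Lemma~\ref{second-deformation} to pass from $\Gamma$ to a polygon, then Theorem~\ref{first-deformation} to reach a triangle---is exactly the paper's. The difference lies entirely in the smoothing step. Instead of a corner-rounding operator, the paper runs curve-shortening flow $\phi_t$ for a uniformly chosen short time $\eps>0$: first deform $\Gamma=\Gamma_0$ to $\phi_\eps(\Gamma_0)$ along $t\mapsto\phi_t(\Gamma_0)$, and then deform through $s\mapsto\phi_\eps(\Gamma_s)$ to the planar convex curve $\phi_\eps(\Gamma_2)$. Curve shortening instantly smooths piecewise-smooth initial data, depends continuously on the initial curve, and (by~\cite{hatt}) does not increase total curvature in $\RR^n$. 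This dissolves precisely the bookkeeping you flag as ``the most delicate part'': there is no need to track appearing, disappearing, or merging corners, nor to tune a rounding radius against side lengths and self-distances.

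Two points in your version need tightening. First, the parenthetical ``the total curvature of an immersed arc equals the net rotation of its tangent'' is false for space curves: total curvature is the \emph{length} of the tangent indicatrix on the sphere, which is only bounded below by the spherical distance between its endpoints. To ensure $R_\eps$ does not increase $\tc$, you must take each rounding arc to lie in the $2$-plane through the corner spanned by the incoming and outgoing tangent directions. Second, Stage~3 is unnecessary (the theorem asks only for a planar convex curve, and $\phi_\eps$ of a triangle already is one), and ``convex combination of $T$ with an inscribed circle'' is not obviously embedded or convex without specifying compatible parametrizations. The payoff of your route is that it is entirely elementary, avoiding the short-time existence and continuous-dependence theory for curve-shortening flow from non-smooth initial data that the paper uses as a black box.
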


\begin{proof}
If $C$ is a simple closed curve in $\RR^n$, let $\phi_t(C)$ be the result of flowing $C$ for time $t$ by curve-shortening flow.  (We assume $t>0$ is small enough that the flow is smooth on 
the time interval $(0,t]$.)

By Theorem~\ref{first-deformation} and Lemma~\ref{second-deformation}, there is a one-parameter family
\[
    s\in [0,2] \mapsto \Gamma_s
\]
of simple closed curves
such that:
\begin{enumerate}
\item $\Gamma_0=\Gamma$.
\item $s\in [0,1]\mapsto \Gamma_s$ are piecewise smooth curves as in Lemma~\ref{second-deformation}.
\item $\Gamma_s$ is polygonal for $s\in [1,2]$.
\item $\Gamma_2$ is a triangle.
\item The total curvature of $\Gamma_s$ is a decreasing function of $s$.
\end{enumerate}

Choose $\eps>0$ small enough so that for each $s\in [0,2]$, then curve-shortening flow
starting with $\Gamma_s$ remains smooth and embedded on the time interval $(0,\eps]$.

Now deform $\Gamma=\Gamma_0$ to $\phi_\eps(\Gamma_0)$ by
\[
   t\in [0,\eps] \mapsto \phi_t(\Gamma_0).
\]
Then deform $\phi_\eps(\Gamma_0)$ to the plane convex curve $\phi_\eps(\Gamma_2)$ by
\[
    s\in [0,2] \mapsto \phi_\eps(\Gamma_s).
\]
Note that we have deformed $\Gamma$ to the plane convex curve $\phi_\eps(\Gamma)$
through smooth, simple closed curves, each of total curvature $\le \alpha$.  
(By~\cite{hatt}*{Lemma~3.4}, curve shortening in $\RR^n$ reduces total curvature.)
\end{proof}

\section{Unique Limits}

\begin{theorem}\label{unique}
Suppose $\Sigma$ is an $m$-dimensional compact, smoothly embedded minimal surface in $\RR^n$
with smooth boundary $\Gamma$.
Suppose
\[
   t\in I \mapsto M(t)
\]
is a standard mean curvature flow of $m$-dimensional surfaces in $\RR^n$ with fixed boundary $\Gamma$.
\begin{enumerate}[\upshape (1)]
\item\label{forward} If $I=[0,\infty)$ and if there is a sequence $t_i\to \infty$ such that $M(t_i)$ converges smoothly (with multiplicity $1$)
to $\Sigma$, 
then $M(t)$ converges smoothly as $t\to\infty$ to $\Sigma$.
\item\label{backward} If $I=(-\infty, 0]$ and  if there is a sequence $t_i\to -\infty$ such that $M(t_i)$ converges smoothly (with multiplicity $1$)
to $\Sigma$, then 
Then $M(t)$ converges smoothly as $t\to-\infty$ to $\Sigma$.
\end{enumerate}
\end{theorem}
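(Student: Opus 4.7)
The plan is to apply the Lojasiewicz--Simon inequality~\cite{simon-asymptotics}*{Theorem~3} to the Dirichlet area functional $E$ in a small $C^{2,\alpha}$-neighborhood of the minimal surface $\Sigma$. Cases~\eqref{forward} and~\eqref{backward} are formally parallel, so I would describe~\eqref{forward} in detail and indicate the sign changes needed for~\eqref{backward}.

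First I would realize the flow near each time $t_i$ as a smooth one-parameter family of normal graphs over $\Sigma$. Since $M(t_i) \to \Sigma$ smoothly with multiplicity one, and $\partial M(t) \equiv \Gamma = \partial\Sigma$, for all sufficiently large $i$ we can write $M(t_i) = \operatorname{graph}_\Sigma(u_i)$ with $u_i$ a smooth normal section vanishing on $\Gamma$ and $\|u_i\|_{C^{2,\alpha}} \to 0$. By the interior local regularity of~\cite{white-local} and the boundary local regularity of~\cite{white-mcf-boundary}, together with parabolic short-time existence for the graphical equation, the flow $M(\cdot)$ is smooth and of the form $\operatorname{graph}_\Sigma(u(\cdot,t))$ on a maximal interval $[t_i, T_i)$, for as long as $u(\cdot,t)$ stays small in $C^{2,\alpha}$.

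On this interval, mean curvature flow is (up to a bounded positive conformal factor from the graphing Jacobian) the $L^2$-gradient flow of $E$, so
\[
   -\frac{d}{dt} E(u(\cdot,t)) \;\sim\; \|\partial_t u(\cdot,t)\|_{L^2}^2 \;\sim\; \|\nabla_{L^2} E(u(\cdot,t))\|_{L^2}^2.
\]
Simon's theorem, applied to the (real-analytic) functional $E$ on normal sections vanishing on $\Gamma$, produces constants $C > 0$ and $\theta \in (0,\frac{1}{2}]$ such that
\[
   |E(u) - E(0)|^{1-\theta} \;\le\; C\,\|\nabla_{L^2} E(u)\|_{L^2}
\]
for all sufficiently small $u$. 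Setting $H(t) := (E(u(\cdot,t)) - E(0))^\theta$ (and noting that we may assume $E(u(\cdot,t)) > E(0)$, as otherwise the flow is identically $\Sigma$), the standard differentiation argument of~\cite{simon-asymptotics} gives $-H'(t) \ge c\,\|\partial_t u(\cdot,t)\|_{L^2}$, whence
\[
   \int_{t_i}^{T} \|\partial_t u(\cdot,t)\|_{L^2}\, dt \;\le\; c^{-1}\bigl(E(u_i) - E(0)\bigr)^\theta
\]
for every $T < T_i$. Since the right-hand side tends to $0$ as $i \to \infty$, a standard continuity argument shows that for $i$ large enough $u(\cdot,t)$ cannot leave the small $C^{2,\alpha}$-neighborhood, so $T_i = \infty$. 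Thus $u(\cdot,t)$ is Cauchy in $L^2$ as $t \to \infty$ with a unique $L^2$-limit, which must equal $0$ because of the known subsequential smooth limit, and parabolic interior and boundary estimates upgrade the convergence to smoothness.

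For~\eqref{backward}, monotonicity of $E$ along the flow together with $M(t_i) \to \Sigma$ as $t_i \to -\infty$ forces $E(u(\cdot,t)) \le E(0)$ for all $t$ and $E(u(\cdot,t)) \to E(0)$ as $t \to -\infty$. Running the same argument with $H(t) := (E(0) - E(u(\cdot,t)))^\theta$, so that $H'(t) \ge c\,\|\partial_t u\|_{L^2}$ and $H(-\infty) = 0$, and integrating from $-\infty$ to $0$ yields the analogous finite-length bound and a unique $L^2$-limit as $t \to -\infty$. The main obstacle in this scheme is verifying that Simon's framework genuinely applies in this Dirichlet setting, i.e., that the Jacobi operator of $\Sigma$ with zero Dirichlet data on $\Gamma$ fits the analytic--semi-Fredholm setup of~\cite{simon-asymptotics}; this follows from standard elliptic theory for the Jacobi operator on a compact minimal surface with smooth boundary together with analyticity of the area functional, after which the remainder of the argument is routine.
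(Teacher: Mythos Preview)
Your proposal is correct and takes essentially the same approach as the paper: the paper's proof consists of citing Schulze~\cite{schulze-unique}, whose argument is precisely the \L ojasiewicz--Simon scheme you have sketched, and remarking that the same proof goes through with Dirichlet boundary data and with ordinary area (and $\rho\equiv 1$) in place of Gaussian area. You have simply unpacked that citation by going directly to~\cite{simon-asymptotics}.
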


\begin{proof}
If $t\in [0,\infty)\mapsto M(t)$ is a solution of the renormalized mean curvature flow
\[
   (\textnormal{velocity at $x$})^\perp = H + \frac{x^\perp}2,
\]
and if there is no boundary, then Assertion~\ref{forward} is proved in~\cite{schulze-unique}*{Corollary~1.2}.
In fact, the same proof works when $M$ has boundary, when $t\to -\infty$, and 
when the flow is mean curvature flow rather than renormalized mean curvature
flow.  (For mean curvature flow, in the proofs in~\cite{schulze-unique}, one should let $\Ee$ be the ordinary area
of the surface and $\rho$ be the constant function $1$.)
\end{proof}

\begin{bibdiv}

\begin{biblist}

\bib{allard-first-variation}{article}{
   author={Allard, William K.},
   title={On the first variation of a varifold},
   journal={Ann. of Math. (2)},
   volume={95},
   date={1972},
   pages={417--491},
   issn={0003-486X},
   review={\MR{0307015}},
   doi={10.2307/1970868},
}

\bib{allard-boundary}{article}{
   author={Allard, William K.},
   title={On the first variation of a varifold:  behavior},
   journal={Ann. of Math. (2)},
   volume={101},
   date={1975},
   pages={418--446},
   issn={0003-486X},
   review={\MR{0397520}},
   doi={10.2307/1970934},
}

\bib{AT}{article}{
   author={Almgren, Frederick J., Jr.},
   author={Thurston, William P.},
   title={Examples of unknotted curves which bound only surfaces of high
   genus within their convex hulls},
   journal={Ann. of Math. (2)},
   volume={105},
   date={1977},
   number={3},
   pages={527--538},
   issn={0003-486X},
   review={\MR{442833}},
   doi={10.2307/1970922},
}

\bib{bernstein-wang-top}{article}{
   author={Bernstein, Jacob},
   author={Wang, Lu},
   title={A topological property of asymptotically conical self-shrinkers of
   small entropy},
   journal={Duke Math. J.},
   volume={166},
   date={2017},
   number={3},
   pages={403--435},
   issn={0012-7094},
   review={\MR{3606722}},
   doi={10.1215/00127094-3715082},
}

\bib{brendle}{article}{
   author={Brendle, Simon},
   title={Embedded self-similar shrinkers of genus 0},
   journal={Ann. of Math. (2)},
   volume={183},
   date={2016},
   number={2},
   pages={715--728},
   issn={0003-486X},
   review={\MR{3450486}},
   doi={10.4007/annals.2016.183.2.6},
}

\bib{choi-h-h-white}{article}{
   author={Choi, Kyeongsu},
   author={Haslhofer, Robert},
   author={Hershkovits, Or},
   author={White, Brian},
   title={Ancient asymptotically cylindrical flows and applications},
   journal={Invent. Math.},
   volume={229},
   date={2022},
   number={1},
   pages={139--241},
   issn={0020-9910},
   review={\MR{4438354}},
   doi={10.1007/s00222-022-01103-2},
}

\bib{EWW}{article}{
   author={Ekholm, Tobias},
   author={White, Brian},
   author={Wienholtz, Daniel},
   title={Embeddedness of minimal surfaces with total boundary curvature at
   most $4\pi$},
   journal={Ann. of Math. (2)},
   volume={155},
   date={2002},
   number={1},
   pages={209--234},
   issn={0003-486X},
   review={\MR{1888799}},
   doi={10.2307/3062155},
}

\bib{hardt-simon-boundary}{article}{
   author={Hardt, Robert},
   author={Simon, Leon},
   title={Boundary regularity and embedded solutions for the oriented
   Plateau problem},
   journal={Ann. of Math. (2)},
   volume={110},
   date={1979},
   number={3},
   pages={439--486},
   issn={0003-486X},
   review={\MR{554379 (81i:49031)}},
   doi={10.2307/1971233},
}

\bib{hatt}{article}{
author={H\"attenschweiler, J.},
date={2015},
title={Curve shortening flow in $R^n$},
note={ETH PhD Thesis, https://www.research-collection.ethz.ch/handle/20.500.11850/155054},
}

\bib{hubbard}{article}{
   author={Hubbard, J. H.},
   title={On the convex hull genus of space curves},
   journal={Topology},
   volume={19},
   date={1980},
   number={2},
   pages={203--208},
   issn={0040-9383},
   review={\MR{572584}},
   doi={10.1016/0040-9383(80)90007-5},
}
		
\bib{milnor}{article}{
   author={Milnor, J. W.},
   title={On the total curvature of knots},
   journal={Ann. of Math. (2)},
   volume={52},
   date={1950},
   pages={248--257},
   issn={0003-486X},
   review={\MR{37509}},
   doi={10.2307/1969467},
}

\bib{schulze-unique}{article}{
   author={Schulze, Felix},
   title={Uniqueness of compact tangent flows in mean curvature flow},
   journal={J. Reine Angew. Math.},
   volume={690},
   date={2014},
   pages={163--172},
   issn={0075-4102},
   review={\MR{3200339}},
   doi={10.1515/crelle-2012-0070},
}

\bib{simon-asymptotics}{article}{
   author={Simon, Leon},
   title={Asymptotics for a class of nonlinear evolution equations, with
   applications to geometric problems},
   journal={Ann. of Math. (2)},
   volume={118},
   date={1983},
   number={3},
   pages={525--571},
   issn={0003-486X},
   review={\MR{727703}},
   doi={10.2307/2006981},
}

\bib{White-Stratification}{article}{
   author={White, Brian},
   title={Stratification of minimal surfaces, mean curvature flows, and
   harmonic maps},
   journal={J. Reine Angew. Math.},
   volume={488},
   date={1997},
   pages={1--35},
   issn={0075-4102},
   review={\MR{1465365 (99b:49038)}},
   doi={10.1515/crll.1997.488.1},
}

\bib{white-local}{article}{
   author={White, Brian},
   title={A local regularity theorem for mean curvature flow},
   journal={Ann. of Math. (2)},
   volume={161},
   date={2005},
   number={3},
   pages={1487--1519},
   issn={0003-486X},
   review={\MR{2180405 (2006i:53100)}},
   doi={10.4007/annals.2005.161.1487},
}

\bib{white-mcf-boundary}{article}{
   author={White, Brian},
   title={Mean Curvature Flow with Boundary},
   journal={Ars Inveniendi Analytica},
   note={arXiv:1901.03008 [math.DG]}
   date={2021},
}

\end{biblist}

\end{bibdiv}

\end{document}